\documentclass[11pt,thmsa]{article}

%
%
\usepackage{amssymb,latexsym}
\usepackage{amsmath,amscd}
\usepackage[all]{xy}

%

\setlength{\topmargin}{1.2cm} \setlength{\parindent}{10pt}
\setlength{\textwidth}{16cm} \setlength{\textheight}{22.5cm}
\setlength{\hoffset}{-1.5cm} \setlength{\voffset}{-2cm}

 \usepackage[latin1]{inputenc}
 \usepackage{mathrsfs}
 \usepackage{dsfont}
 \usepackage{amsmath}
 \usepackage{amsthm}
 \usepackage{amsfonts}
 \usepackage{amssymb}
 \usepackage[dvips]{graphicx}
 \usepackage{wrapfig}
 \usepackage{layout}
 \usepackage{verbatim}
 \usepackage{alltt}
 \usepackage{xypic}
 \usepackage{bbm}
 \input xy
 \xyoption{all}

 \DeclareMathAlphabet{\mathpzc}{OT1}{pzc}{m}{it}






 \newtheorem{theorem}{Theorem}[section]
 \newtheorem{lemma}[theorem]{Lemma}
 \newtheorem{proposition}[theorem]{Proposition}
 
 \newtheorem{corollary}[theorem]{Corollary}
 \newtheorem{definition}[theorem]{Definition}
  \theoremstyle{definition}
 
 \newtheorem{remark}[theorem]{Remark}

\newtheorem*{acknowledgements}{Acknowledgements}
\renewenvironment{proof}{\noindent{\it
Proof.}}{\bgroup\hspace{\stretch{1}}$\square$\egroup\medskip\par}
\newcommand{\rmap}{\longrightarrow}

\newcommand{\RRep}{\mathcal{R}\textrm{ep}^{\infty}}
\newcommand{\URRep}{\mathcal{\hat{R}}\textrm{ep}^{\infty}}

\newcommand{\Ad}{\mathrm{Ad}}
\newcommand{\ad}{\mathrm{ad}}
\newcommand{\hPsi}{\hat{\Psi}}
\newcommand{\hR}{\hat{R}}

\newcommand{\id}{\mathrm{id}}
\newcommand{\End}{\textrm{End}}
\newcommand{\Hom}{\textrm{Hom}}

\begin{document}
\vspace{15cm}
 \title{Deformations of Lie brackets and representations up to homotopy}
\author{Camilo Arias Abad\footnote{Institut f\"ur Mathematik, Universit\"at Z\"urich,
camilo.arias.abad@math.uzh.ch. Partially supported
by SNF Grant 20-113439. } \hspace{0cm} and
Florian Sch\"atz\footnote{Center for Mathematical Analysis, Geometry and Dynamical Systems, IST Lisbon, fschaetz@math.ist.utl.pt. Partially supported by the research grant of the University of Z\"urich, the SNF-grant 20-121640, a FCT grant and project PTDC/MAT/098936/2008.}}


 \maketitle
 \begin{abstract} 
We show that representations up to homotopy can be differentiated in 
 a functorial way. A van Est type isomorphism theorem is established and used to prove
 a conjecture of Crainic and Moerdijk on deformations of Lie brackets.
\end{abstract}

\tableofcontents

\section{Introduction}\label{s:intro}

Some of the usual constructions of representations of Lie groups
and Lie algebras can be extended to the case of Lie groupoids and algebroids only in the context
of representations up to homotopy. These are $A_\infty$-, respectively $L_\infty$-, versions of usual representations. 
For Lie groupoids, the adjoint representation is a representation
up to homotopy which has the formal properties one would expect, for instance with respect to the cohomology of
the classifying space (\cite{AC2}). For Lie algebroids, the adjoint representation is a representation up to homotopy (\cite{AC1,Mehta}),
whose associated cohomology controls the infinitesimal deformations of the structure, as expected from the case of Lie algebras.

The purpose of this paper is to compare the global and the infinitesimal versions of representations up to homotopy, and to explain an application
regarding the deformations of Lie brackets.
We construct a differentiation functor $\Psi: \URRep(G) \rightarrow \RRep(A)$ from the category of unital representations up
to homotopy of a Lie groupoid to the category of representations up to homotopy of its Lie algebroid. Moreover, we show that given
a representation up to homotopy $E$ of $G$ there is a homomorphism
\[\Psi: H(G,E) \rightarrow H(A,\Psi(E))\]
from the cohomology associated to $E$ to the cohomology associated to $\Psi(E)$. We prove a van Est type theorem, which provides
conditions under which this map is an isomorphism (in certain degrees).

In \cite{CrainicMoerdijk} Crainic and Moerdijk introduced a deformation complex for Lie algebroids. Its cohomology controls
the deformations
of the Lie algebroid structures and arises in the study of the stability of leaves of Lie algebroids (\cite{CrainicFernandes}). Based 
on the rigidity properties of compact group actions, Crainic and Moerdijk stated a rigidity conjecture (Conjecture $1$ of \cite{CrainicMoerdijk}) which gives
conditions under which the deformation cohomology should vanish. In the case of a Lie algebra $\mathfrak{g}$, 
the conjecture corresponds to the vanishing of $H^2(\mathfrak{g},\mathfrak{g})$, for $\mathfrak{g}$ semisimple of compact type.
We will explain how representations up to homotopy can be used to give a proof of this conjecture.
Since the deformation cohomology of a Lie algebroid coincides with the cohomology associated to the adjoint representation, 
one can reproduce
the usual proof for Lie algebras (\cite{VanEst1,VanEst2,VanEst3}), once the differentiation functor and the van Est isomorphism theorem have been established
in the context of representations up to homotopy.

The paper is organized as follows.  Section \S \ref{preliminaries} contains the definitions and general facts
about representations up to homotopy.  In Section \S \ref{s:VanEst} we construct the differentiation functor for representations up to homotopy.
The main result of this section is Theorem \ref{theorem:differentiation_reps_up_to_homotopy}. 
We begin Section \S \ref{section:deformations} by proving a van Est isomorphism theorem, see Theorem \ref{isomorphism}.
We then show that by differentiating the adjoint representation of a Lie groupoid one obtains that of the Lie algebroid (Proposition \ref{Ad=ad}).
At the end we prove the rigidity conjecture of \cite{CrainicMoerdijk}, which is Theorem \ref{conjecture}.

\begin{acknowledgements}We would like to thank Alberto Cattaneo, Marius Crainic and Ionut Marcut for invitations to Z\"urich and Utrecht, where
parts of this work were done. Camilo Arias Abad also thanks Marius Crainic for many discussions on this subject.
\end{acknowledgements}

\section{Preliminaries}\label{preliminaries}

In order to fix our conventions, we will review the definitions and basic facts regarding
representations up to homotopy of Lie algebroids and groupoids. More details on these constructions as well as the proofs of the
results stated in this section can be found in
\cite{AC1,AC2}. Throughout the text, $A$ will denote a Lie algebroid over a manifold
$M$, and $G$ will be a Lie groupoid over $M$.

Given a Lie algebroid $A$, there is a differential graded algebra
$\Omega(A)= \Gamma(\Lambda A^*)$, with differential defined via
the Koszul formula
\begin{eqnarray*}
d \omega(\alpha_1, \dots ,\alpha_{n+1})&=& \sum_{i<j}(-1)^{i+j} \omega([\alpha_i,\alpha_j],\cdots,\hat{\alpha}_i,\dots,\hat{\alpha}_j,\dots,\alpha_{k+1})\\
&&+\sum_i(-1)^{i+1}L_{\rho(\alpha_i)}\omega(\alpha_1,\dots,\hat{\alpha}_i,\dots,\alpha_{k+1}),
\end{eqnarray*}
where $\rho$ denotes the anchor map and $L_{X}(f)= X(f)$ is the Lie derivative along vector fields.
The operator $d$ is a coboundary operator ($d^{2}= 0$) and satisfies
the derivation rule
\[ d(\omega\eta)= d(\omega)\eta+ (-1)^p\omega d(\eta),\]
for all $\omega\in \Omega^p(A), \eta\in \Omega^q(A)$.

\begin{definition}
Let $A$ be a Lie algebroid over $M$. An $A$-connection on a vector
bundle $E$ over $M$ is a bilinear map $\nabla:\Gamma(A)\times
\Gamma(E) \rightarrow \Gamma(E)$, $(\alpha,S) \mapsto
\nabla_{\alpha}(S)$ such that
\[  \nabla_{f\alpha}(s) = f\nabla_{\alpha}(s)\quad \textrm{and} \quad \nabla _{\alpha}(fs)=f\nabla_{\alpha}(s)+(L_{\rho(\alpha)}f)s\]
hold for all $f\in C^{\infty}(M)$, $s\in \Gamma(E)$ and $ \alpha \in
\Gamma(A)$. The curvature of $\nabla$ is the tensor given by
\begin{equation*}
R_{\nabla}(\alpha, \beta)(s):= \nabla_{\alpha}\nabla_{\beta}(s)-
\nabla_{\beta}\nabla_{\alpha}(s)- \nabla_{[\alpha, \beta]}(s)
\end{equation*}
 for $\alpha,\beta \in \Gamma(A)$, $s \in \Gamma(E)$.

The $A$-connection $\nabla$ is called flat if $R_{\nabla}= 0$. A
representation of $A$ is a vector bundle $E$ together with a flat
$A$-connection $\nabla$ on $E$.
\end{definition}
Given a graded vector bundle $E=\bigoplus_{k\in \mathbb{Z}}E^k $ over $M$, we denote by $\Omega(A,E)$ the space $\Gamma(\Lambda(A^*) \otimes E)$,
graded with respect to the total degree. The wedge product gives this space the structure of a graded commutative module over the algebra $\Omega(A)$. 
This means that $\Omega(A,E)$ is a bimodule over $\Omega(A)$ and that
\begin{equation*}
\omega \wedge \eta=(-1)^{kp} \eta \wedge \omega
\end{equation*}
holds for $\omega \in \Omega^k(A)$ and $\eta \in \Omega(A,E)^p$. 
When doing the wedge products, it is important that one takes the grading into account. 
See \cite{AC1} a detailed explanation of the sign conventions used in the defintion in the wedge product. 
In order to simplify the notation we will sometimes omit the wedge symbol.
\begin{definition} A representation up to homotopy of $A$ consists of a graded vector bundle $E$ over $M$
and a linear operator
\begin{equation*}
D:\Omega(A,E)\rightarrow \Omega(A,E)
\end{equation*}
which increases the total degree by one and satisfies $D^2= 0$ as well as
the graded derivation rule
\begin{equation*}
D(\omega \eta)=d(\omega) \eta +(-1)^k \omega D(\eta)
\end{equation*}
for all $\omega\in \Omega^k(A)$, $\eta\in \Omega(A, E)$. The cohomology of the resulting complex is denoted by
$H(A, E)$.
\end{definition}
Intuitively, a representation up to homotopy of $A$ is a cochain complex of
vector bundles over $M$ endowed with an $A$-connection which is flat up to homotopy. This is made precise as follows:

\begin{proposition}\label{Rep-long}
There is a bijective correspondence between representations up to homotopy
$(E, D)$ of $A$ and graded vector bundles $E$ over $M$ equipped with
\begin{enumerate}
\item A degree $1$ operator $\partial$ on $E$ making $(E, \partial)$ a cochain complex of vector bundles.
\item An $A$-connection $\nabla$ on $E$ which respects the degree and commutes with $\partial$.

\item For each $i> 1$ an $\mathrm{End(E)}$-valued $i$-cochain $\omega_{i}$ of total degree 1, i.e.
\[ \omega_{i}\in \Omega^i(A, \mathrm{End}^{1-i}(E))\]
satisfying

\[ \partial(\omega_{i})+ d_{\nabla}(\omega_{i-1})+ \omega_{2}\wedge \omega_{i-2}+
\omega_{3}\wedge \omega_{i-3}+ \ldots + \omega_{i-2}\wedge \omega_{2}=
0.\]
\end{enumerate}
The correspondence is characterized by
\[ D(\eta)= \partial(\eta)+ d_{\nabla}(\eta)+ \omega_{2}\wedge \eta+ \omega_{3}\wedge \eta+ \ldots .\]
\end{proposition}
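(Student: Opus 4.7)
The plan is a direct unpacking of the definition. The key observation is that, by the graded derivation rule, any operator $D$ on $\Omega(A,E)$ satisfying $D(\omega \eta) = d(\omega) \eta + (-1)^k \omega D(\eta)$ is uniquely determined by its restriction to $\Gamma(E) = \Omega^0(A,E)$, since $\Omega(A,E)$ is generated as an $\Omega(A)$-module by sections of $E$. Conversely, any $\mathbb{R}$-linear degree-$1$ map $D : \Gamma(E) \to \Omega(A,E)$ extends uniquely by the Leibniz rule to all of $\Omega(A,E)$. So the correspondence amounts to analyzing a single map on $\Gamma(E)$.

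For $s \in \Gamma(E^k)$, decompose $D(s) = \sum_{i \geq 0} D_i(s)$ with $D_i(s) \in \Omega^i(A, E^{k+1-i})$, and apply the derivation rule to $D(fs) = d(f) \cdot s + f \cdot D(s)$ for $f \in C^\infty(M)$. Matching form-degrees forces: $D_0$ is $C^\infty(M)$-linear, hence a degree-$1$ bundle endomorphism $\partial$ of $E$; $D_1$ satisfies the connection Leibniz rule, hence equals $d_\nabla$ for a degree-preserving $A$-connection $\nabla$; and $D_i$ with $i \geq 2$ is $C^\infty(M)$-linear, hence given by the wedge product with a tensor $\omega_i \in \Omega^i(A, \mathrm{End}^{1-i}(E))$. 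This establishes the bijection at the level of data, together with the stated formula for $D$.

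To translate $D^2 = 0$ into the structure equations, I would again use that $D^2$ is a derivation over $d^2 = 0$, hence $\Omega(A)$-linear, and thus determined by its restriction to $\Gamma(E)$. Expanding $D^2 = (\partial + d_\nabla + \sum_{i \geq 2} \omega_i \wedge -)^2$ and sorting the output by form-degree gives, in form-degree $n$ on $\Gamma(E)$, precisely the left-hand side of the $n$-th structure equation, with the conventions $d_\nabla \circ d_\nabla = R_\nabla$ (read as $d_\nabla(\omega_1)$ by formally setting $\omega_1 = \nabla$) and $\partial(\omega_i)$ denoting the graded commutator of $\partial$ with $\omega_i$ on $\mathrm{End}(E)$-valued forms. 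Setting each form-degree component to zero recovers $\partial^2 = 0$, the compatibility $\partial \nabla = \nabla \partial$, and the displayed higher identities; the converse direction follows by reversing the same computation.

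The main obstacle is the sign book-keeping. Because both the form-degree on $A$ and the internal grading on $E$ contribute to the graded commutativity of wedge products and to the composition of $\mathrm{End}(E)$-valued forms, one must invoke the conventions from \cite{AC1} to ensure, for instance, that the two terms $d_\nabla \circ \omega_{i-1}$ and $\omega_{i-1} \circ d_\nabla$ combine with the correct sign into the single graded commutator written $d_\nabla(\omega_{i-1})$ in the statement, and similarly that $[\partial,\omega_i]$ assembles into $\partial(\omega_i)$. Once these sign conventions are fixed, the verification is mechanical, but getting it right is the place where the argument is most delicate.
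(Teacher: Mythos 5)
Your argument is correct and is essentially the same as the standard one: the paper itself omits the proof of this proposition (deferring to \cite{AC1}), and the proof given there proceeds exactly as you do --- restricting $D$ to $\Gamma(E)$ using the derivation rule, splitting the restriction by form degree to extract $\partial$, $\nabla$ and the tensors $\omega_i$, and then expanding $D^2=0$ degree by degree to obtain the structure equations. One small caution: an arbitrary $\mathbb{R}$-linear map $\Gamma(E)\to\Omega(A,E)$ extends via the Leibniz rule only if it already satisfies $D(fs)=df\wedge s+f\,D(s)$ on $\Gamma(E)$; every map you actually extend does satisfy this, so it is a phrasing issue rather than a gap.
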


We also write
\begin{equation}
\label{structure} D= \partial+ \nabla + \omega_{2}+ \omega_{3}+
\ldots .
\end{equation}

There is a natural notion of morphism between representations
up to homotopy.

\begin{definition} A morphism $\phi: E\to F$ between two representations up to homotopy
of $A$ is a linear degree zero map
\begin{equation*}
\phi:\Omega(A,E) \rightarrow \Omega(A,F)
\end{equation*}
which is $\Omega(A)$-linear and commutes with the structure
differentials $D_E$ and $D_F$. 

We denote by $\RRep(A)$ the resulting category.
\end{definition}

Any morphism is necessarily of the form
\begin{equation*}
\phi=\phi_0 +\phi_1+\phi_2 + \cdots
\end{equation*}
where $\phi_i$ is a $\textrm{Hom}(E, F)$-valued $i$-form on $A$ of
total degree zero, i.e.
\[ \phi_i \in \Omega^i(A, \textrm{Hom}^{-i}(E,F))\]
satisfying
\[ \partial(\phi_n)+ d_{\nabla}(\phi_{n-1})+ \sum_{i+j= n, i\geq 2} [\omega_{i}, \phi_{j}]= 0.\]

\begin{remark}
Let $E$ and $E'$ be representations up to homotopy of $A$. A quasi-isomorphism from $E$ to $E'$ is a morphism:
\begin{equation*}
\phi=\phi_0+\phi_1+\phi_2 +\cdots,
\end{equation*}
such that the morphism of complexes $\phi_0$ induces an isomorphism in cohomology at every point. 
Any quasi-isomorphism $\phi$ induces isomorphisms in cohomology:
\begin{equation*}
\phi:H(A,E) \rightarrow H(A,E).
\end{equation*}
\end{remark}

There is a natural representation up to homotopy associated to any Lie algebroid, called
the adjoint representation. It is determined by the choice of a connection $\nabla$
on the vector bundle $A$. Moreover, different choices of connection give naturally isomorphic representations up to homotopy.
\begin{definition}
Let $\nabla$ be a connection on a Lie algebroid $A$. The adjoint representation of $A$ induced
by $\nabla$ is the representation up to homotopy of $A$ on the vector bundle $A \oplus TM$, where $A$
is in degree zero and $TM$ in degree one, given by the following structure operator:
 \begin{equation*}
D=\partial+\nabla^{bas}+K_{\nabla}.
\end{equation*}
 The differential $\partial$ on the graded vector bundle is given by the anchor, $\nabla$ is an $A$-connection and $K_{\nabla}$ is an
 endomorphism valued cochain. The latter two are defined by the formulas
 \begin{eqnarray*}
 \nabla^{bas}_{\alpha}(\beta)&=& \nabla_{\rho(\beta)}(\alpha)+ [\alpha, \beta],\\
 \nabla^{bas}_{\alpha}(X)&=& \rho(\nabla_{X}(\alpha))+ [\rho(\alpha), X],\\
 K_{\nabla}(\alpha,\beta)(X)&=&\nabla_X([\alpha,\beta])-[\nabla_X(\alpha),\beta]-
[\alpha,\nabla_X(\beta)]-\nabla_{\nabla_{\beta}^{\textrm{bas}}X}(\alpha)
+\nabla_{\nabla_{\alpha}^{\textrm{bas}}X}(\beta).
\end{eqnarray*}
This representation up to homotopy is denoted by $\ad_{\nabla}(A)$. The isomorphism class of this
representation, which is independent of the connection, will be denoted by $\ad(A)$.
\end{definition}

Next we will discuss representations up to homotopy of groupoids.
Given a Lie groupoid $G$ over $M$,  we will write $s$ and $t$ for the source and target map
respectively.  We denote the space of sequences
$(g_1,\dots,g_k)$ of composable arrows in $G$ by $G_k$. Here we write the sequence in the order
of the composition of functions, namely $t(g_i)=s(g_{i-1})$. Since the source and target map
are required to be submersions, the spaces $G_k$ are smooth manifolds. We will also write $s$ and $t$ for
the maps 
\begin{align*}
s:G_k \rightarrow M=:G_0, \,(g_1.\dots,g_k)\mapsto s(g_k),\\
t:G_k \rightarrow M=:G_0, \, (g_1.\dots,g_k)\mapsto t(g_1).
\end{align*}

The differentiable cohomology of $G$, denoted $H_{\textrm{diff}}(G)$, or just $H(G)$,
is the cohomology of the complex
\[ \delta: C^k(G)\rmap C^{k+1}(G),\]
where $C^k(G)$ is the space of smooth valued functions on $G_k$, and the coboundary operator is
\begin{equation*}
\delta(f)(g_1,\dots ,g_{k+1})=(-1)^kf(g_2,\dots ,g_{k+1})+\sum_{i=1}^{k}(-1)^{i+k}f(g_1,\dots ,g_ig_{i+1},\dots,g_{k+1})-f(g_1,\dots,g_k).
\end{equation*}

The space
$C^{\bullet}(G )=\oplus_k C^k(G)$ has an algebra structure given by
\[ (f \star h)(g_1, \ldots , g_{k+p})=(-1)^{kp} f(g_1, \ldots , g_k) h(g_{k+1}, \ldots , g_{k+p}),\]
for $f\in C^k(G)$, $h\in C^p(G)$. In this way, $C^{\bullet}(G)$
becomes a DGA (differential graded algebra), i.e. the coboundary operator
$\delta$ is an algebra derivation
\[ \delta (f \star h)= \delta(f) \star h+ (-1)^k f \star \delta(h).\]
In particular, $H(G)$ inherits a graded algebra structure.

The space $\hat{C}(G)$ of normalized cochains is defined by
\begin{equation*}
\hat{C}^k(G)=\{f \in C^k(G): f(g_1,\dots, 1,\dots , g_k)=0 \textrm{ if } k>0\}.
\end{equation*}

The differential graded algebra structure on $C^{\bullet}(G)$ restricts to this subspace, 
and the inclusion $\hat{C}(G) \hookrightarrow C(G)$ induces
an isomorphism in cohomology.
Given a vector bundle $E$ over $M$, the vector space of $E$-valued
cochains $C^{\bullet}(G, E)$ is the graded vector space 
\begin{equation*}
C^{\bullet}(G, E)=\bigoplus_{k\in \mathbb{Z}} C^k(G,E),
\end{equation*}
where
\[ C^{k}(G, E):=\Gamma(G_k, t^*(E)).\]
As before, the subspace of normalized cochains is 
\begin{equation*}
\hat{C}^k(G,E)=\{\eta \in C^k(G,E): \eta (g_1,\dots, 1,\dots , g_k)=0 \textrm{ if } k>0\}.
\end{equation*}
For a graded vector bundle $E =\bigoplus_k E^k$, the space of $E$ valued cochains is denoted
by $ C(G, E)^{\bullet}$ and is graded with respect to the total degree. Explicitly:
\begin{align*}
C(G, E)^{k}:= \bigoplus_{i+j=k}C^i(G, E^j),\\
\hat{C}(G, E)^{k}:= \bigoplus_{i+j=k}\hat{C}^i(G, E^j).
\end{align*}
The space $C(G, E)^{\bullet}$ is a graded right module over the algebra
$C^{\bullet}(G)$: Given $f \in C^k(G)$ and $\eta\in C^p(G, E^q)$, their product $\eta \star f \in C^{k+p}(G, E^q)$ is defined by
\[ (\eta \star  f)(g_1, \ldots , g_{k+p})= (-1)^{kp}\eta(g_1, \ldots , g_p) f(g_{p+1}, \ldots , g_{p+k}).\]
The same formulas give $\hat{C}(G, E)^{\bullet}$ the structure of a right $\hat{C}^{\bullet}(G)$-module.

\begin{definition}\label{def-rep-up}
A representation up to homotopy of $G$ on a graded vector bundle $E$
over $M$ is a linear operator 
\[ D:C(G,E)^{\bullet}
\rightarrow C(G,E)^{\bullet+1}, \]
which increases the total degree by one and satisfies $D^2= 0$ as well as the 
Leibniz identity
\[ D(\eta \star f )= D(\eta)\star f+(-1)^k \eta \star  \delta(f)\]
for $\eta \in C(G,E)^k$ and  $f \in C^{\bullet}(G)$. We will denote the resulting cohomology by $H(G,E)$.
A morphism $\phi: E\rmap E'$ between two 
representations up to homotopy $E$ and $E'$ is a degree zero
$C^{\bullet}(G)$-linear map
\begin{equation*}
\phi:C(G,E)^{\bullet}\rightarrow C(G,E')^{\bullet},
\end{equation*}
that commutes with the structure operators of $E$ and $E'$. 
We denote the resulting category by 
$\RRep(G)$. 

We call a representation up to homotopy unital if the 
operator $D$ preserves the normalized subcomplex. A morphism of unital representations up to homotopy is a morphism of
representations up to homotopy that preserves the normalized subcomplexes. We denote by $\URRep$ the resulting category of
unital representations up to homotopy.
\end{definition}

In order to describe the structure of these representations up to homotopy, we consider the bigraded vector space 
$C_G(\End(E))$ which, in 
bidegree $(k,l)$, is
\begin{equation*}
C^k_G(\End^l(E))=\Gamma(G_k,\Hom(s^*(E^{\bullet}),t^*(E^{\bullet+l}))).
\end{equation*}
Recall that $s$ and $t$ are the maps $s(g_1,\dots,g_k)=s(g_k)$ and
$t(g_1,\dots,g_k)=t(g_1)$ respectively. 
This space has the structure of a bigraded algebra as follows: The product of $F \in C^k_G(\End^l(E))$ and 
$F' \in C^{k'}_G(\End^{l'}(E))$ is the element $F \circ F'$ of $C^{k+k'}_G(\End^{l+l'}(E))$ defined by the formula
\begin{equation*}
(F \circ F')(g_1,\dots,g_{k+k'})=F(g_1,\dots,g_k)\circ F'(g_{k+1},\dots,g_{k+k'}).
\end{equation*}

Similarly, we consider the bigraded space 
\begin{equation*}
C_G( \Hom(E, E'))=\Gamma(G_k,\Hom(s^*(E^{\bullet}),t^*(E'^{\bullet+l}))),
\end{equation*}
for any two graded vector bundles $E$ and $E'$. By the same formula as above, this space is a $C_G(\End(E'))$-$C_G(\End(E))$ bimodule.

The normalized versions of these spaces are defined as follows:
\begin{eqnarray*}
& \hat{C}^k_G(\End^l(E))&=\{F \in C^k_G(\End^l(E)): \eta(g_1,\dots ,1,\dots, g_k)=0 \textrm{ if } k>0 \},\\
&\hat{C}^k_G(\Hom^l(E,E'))&=\{F \in C^k_G(\Hom^l(E,E')): \eta(g_1,\dots ,1,\dots, g_k)=0 \textrm{ if } k>0 \}.
\end{eqnarray*}

We will also consider the space $\overline{C}^1_G(\End(E))$ defined by
\begin{equation*}
\overline{C}^1_G(\End^0(E))=\{\eta \in C^1_G(\End^0(E)): \eta(1)=\id \}.
\end{equation*}

\begin{remark}  Given $F\in C^k_G( \End^m(E))$  we define operators
\[\tilde{F}: C(G,E)\rightarrow C(G,E),\]
by the following formulas: For $k=1,m=0$ and $\eta \in C^p(G,E^l)$ we set
\begin{eqnarray}
\tilde{F}(\eta)(g_1, \dots , g_{p+1}) & = & (-1)^{p+l}\Big(F(g_1)\eta(g_2, \ldots , g_{p+1})+ \nonumber \\
                                                               &  + & \sum_{i= 1}^{p}(-1)^i F(g_1, \ldots, g_ig_{i+1}, \ldots , g_{p+1})+ (-1)^{p+1} F(g_1, \ldots, g_p)\Big).  \nonumber
\end{eqnarray}
If $k\neq 1$ or $m\neq 0$, the operator is defined by
\[\tilde{F}(\eta)(g_1,\dots,g_{p+k})=(-1)^{k(p+l)}F(g_1,\dots,g_k)(\eta(g_{k+1},\dots,g_{k+p})).\]
For $k=1$ and $m=0$, the operator $\tilde{F}$ is a graded derivation with respect to the right $C(G)$-module structure and respects the normalized subcomplex if and
only if $F \in \overline{C}^1_G( \End^0(E))$. For $k\neq1$ or $m\neq 0$, the operator $\tilde{F}$ is a map of $C(G)$-modules and it respects the normalized subcomplex if and
only if $F \in \hat{C}^k_G( \End^m(E))$. 
\end{remark}

\begin{proposition}\label{decomposition with respect to F}
There is a bijective correspondence between representations up to
homotopy of $G$ on the graded vector bundle $E$ and sequences
$(F_k)_{k\geq 0}$ of elements $F_k \in C^{k}_G( \mathrm{End}^{1-k}(E))$
which, for all $k\geq 0$, satisfy
\begin{equation}\label{structure equations}
\sum_{j=1}^{k-1} (-1)^{j} F_{k-1}(g_1, \ldots , g_jg_{j+1}, \ldots, g_k)  = 
\sum_{j=0}^{k} (-1)^{j} F_{j}(g_1, \ldots , g_j) \circ F_{k-j}(g_{j+1}, \ldots , g_k). 
\end{equation}
The correspondence is characterized by
\[ D=\tilde{F_0}+\tilde{F_1}+\tilde{F_2}+\dots.\]

A representation is unital if and only if $F_1 \in \overline{C}^1_G(\End^0(E))$ and
$F_k\in \hat{C}^{k}_G( \mathrm{End}^{1-k}(E))$ for $k \neq 1$.

Let $E$ and $E'$ be two representations up to homotopy  with corresponding sequences of structure maps
$(F_k)$ and $(F_{k}^{'})$. There is a bijective correspondence between 
morphisms  of representations up to homotopy $\phi: E\rmap E'$ and sequences
$(\phi_k)_{k\geq 0}$ of elements $\phi_k\in C^k_G(\Hom^{-k}(E,E'))$
which, for all $k\geq 0$, satisfy
\begin{eqnarray}\label{equations for a map}
\sum_{i+j=k}(-1)^{j}\phi_j(g_1,\dots,g_j) \circ
F_{i}(g_{j+1},\dots,g_k)&=& \sum_{i+j=k}F_{j}^{'}(g_1,\dots , g_j)
\circ \phi_{i}(g_{j+1},\dots,g_k)\\
&&+\sum_{j=1}^{k-1}(-1)^{j}\phi_{k-1}(g_1,\dots,g_jg_{j+1},\dots
,g_k).\nonumber
\end{eqnarray}
If $E$ and $E'$ are unital representations up to homotopy, then a morphism between them
is a unital morphism if and only if $\phi_k\in \hat{C}^k_G(\Hom^{-k}(E,E'))$ for $k\geq 0$.
\end{proposition}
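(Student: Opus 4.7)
The plan is to decompose the operator $D$ (and later the morphism $\phi$) according to how many $G$-factors it adds to a cochain, match each piece against the data provided by the remark preceding the proposition, and then translate $D^2=0$ (respectively $\phi\circ D_E = D_{E'}\circ\phi$) into the stated structure equations.

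First, given $D:C(G,E)^\bullet \rmap C(G,E)^{\bullet+1}$ satisfying the Leibniz identity, I decompose $D=\sum_{k\geq 0}D_k$, where $D_k$ sends $C^p(G,E^l)$ into $C^{p+k}(G,E^{l+1-k})$. Since $\delta$ raises the $G$-arrow count by exactly one, comparing arrow counts on both sides of the Leibniz identity forces each $D_k$ with $k\neq 1$ to be $C^\bullet(G)$-linear, while $D_1$ must be a graded derivation of the right $C^\bullet(G)$-module structure. Combined with the fact that $C(G,E)$ is locally generated over $C^\bullet(G)$ by sections in $\Gamma(E)=C^0(G,E)$, this together with the remark yields the first part of the bijection: $C^\bullet(G)$-linear operators of arrow-shift $k\neq 1$ and internal degree $1-k$ correspond uniquely to $F_k \in C^k_G(\End^{1-k}(E))$ via $D_k=\tilde{F}_k$, and graded derivations of total degree one correspond to $F_1\in C^1_G(\End^0(E))$ via the same construction.

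Next, I express $D^2$ as $\sum_{k\geq 0}\bigl(\sum_{i+j=k}\tilde{F}_i\tilde{F}_j\bigr)$ and observe that the inner sum is precisely the component of $D^2$ of arrow-shift $k$. Unpacking each $\tilde{F}_i\tilde{F}_j$ with the explicit formulas from the remark, where the two $C(G)$-linear pieces compose directly into $F_i\circ F_j$ evaluated on the appropriate subtuples of arrows, while the derivation-type $\tilde{F}_1$ contributes the interior partial-composition terms $F_{k-1}(g_1,\dots,g_jg_{j+1},\dots,g_k)$ together with the boundary terms arising from $\tilde{F}_0\tilde{F}_k$ and $\tilde{F}_k\tilde{F}_0$, one recovers precisely equation (\ref{structure equations}). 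Hence $D^2=0$ is equivalent to the full family of structure equations for all $k\geq 0$. For morphisms the argument is lighter because $\phi$ is genuinely $C^\bullet(G)$-linear: its decomposition $\phi=\sum_{k\geq 0}\phi_k$ by arrow-shift consists of $C^\bullet(G)$-linear pieces of total degree zero, which by the remark correspond uniquely to elements $\phi_k\in C^k_G(\Hom^{-k}(E,E'))$; the arrow-shift $k$ component of $\phi\circ D_E = D_{E'}\circ\phi$, with $\tilde{F}_1$ and $\tilde{F}'_1$ again the only source of partial-composition terms, is precisely equation (\ref{equations for a map}). Both unital characterizations follow directly from the final part of the remark, which states exactly that $\tilde{F}_k$ (respectively $\tilde{\phi}_k$) preserves the normalized subcomplex if and only if the underlying cochain lies in $\overline{C}^1_G(\End^0(E))$ for $k=1$, and in the corresponding $\hat{C}^k_G$ for $k\neq 1$.

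The main obstacle will be the sign bookkeeping in the penultimate step: the factors $(-1)^{k(p+l)}$ in the definition of $\tilde{F}_k$, combined with the $(-1)^{p+l}$, $(-1)^i$, and $(-1)^{p+1}$ signs appearing in the derivation formula for $\tilde{F}_1$, must telescope into the $(-1)^j$ signs on both sides of (\ref{structure equations}). Verifying that the compositions $\tilde{F}_i\circ\tilde{F}_j$ with $i=1$ or $j=1$ reproduce the left-hand side of (\ref{structure equations}) with the correct signs, and analogously for the morphism equation, is the technically delicate part of the argument.
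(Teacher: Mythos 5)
Your proof outline is correct and is essentially the standard argument: the paper itself does not prove this proposition (it is recalled from \cite{AC1,AC2}), and the proof given there proceeds exactly as you propose --- decomposing $D$ and $\phi$ into homogeneous components by arrow degree, using the Leibniz rule to force the components of arrow-shift $k\neq 1$ to be $C^{\bullet}(G)$-linear and the $k=1$ component to be a graded derivation, identifying these with the operators $\tilde{F}_k$ via the fact that $C(G,E)$ is generated over $C^{\bullet}(G)$ by $C^0(G,E)=\Gamma(E)$, and matching the arrow-degree components of $D^2=0$, respectively of $\phi\circ D_E=D_{E'}\circ\phi$, against equations (\ref{structure equations}) and (\ref{equations for a map}). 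Two small points you should make explicit in a full write-up: the same generation argument is what rules out components of negative arrow shift (needed so the sequence starts at $k=0$), and for $k=1$ the extraction of $F_1$ from $D_1$ is not literally ``the same construction'' as for $k\neq 1$, since $D_1|_{\Gamma(E)}$ fails to be $C^{\infty}(M)$-linear precisely by the correction terms built into the $k=1$, $m=0$ formula for $\tilde{F}$, so one must check that this failure is exactly tensorial in the appropriate sense.
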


\begin{remark}
Let $E$ and $E'$ be representations up to homotopy of $G$. A quasi-isomorphism from $E$ to $E'$ is a morphism:
\begin{equation*}
\phi=\phi_0+\phi_1+\phi_2 +\cdots,
\end{equation*}
such that the morphism of complexes $\phi_0$ induces an isomorphism in cohomology at every point. 
Any quasi-isomorphism $\phi$ induces isomorphisms in cohomology:
\begin{equation*}
\phi:H(G,E) \rightarrow H(G,E').
\end{equation*}
\end{remark}

\begin{remark}
We have defined unital representations up to homotopy as differentials on the space of cochains which restrict to the
normalized cochains. On the other hand, the proof of Proposition \ref{decomposition with respect to F} shows that a differential on the
normalized cochains extends naturally to a differential on all cochains. Thus, one can take the equivalent point
of view that a unital representation is a degree one derivation on the space of normalized cochains which squares to zero.
Note that this does not conflict with the definition of cohomology: for any unital representation up to homotopy, the inclusion of
normalized cochains in the space of cochains induces an isomorphism in cohomology.  In what follows we will regard unital representations up to homotopy as differentials on the space of normalized cochains. From now on all representations up to homotopy are assumed to be unital, even when we do not mention it explicitly.
\end{remark}
In order to define the adjoint representation of a Lie groupoid, one needs to choose a connection. 
As before, two different connections give naturally isomorphic representations. Let us first explain what
we mean by a connection in this context.

\begin{definition} An Ehresmann connection on a Lie groupoid $G$ is a sub-bundle $\mathcal{H}\subset TG$ which is complementary
to the kernel of $ds$ and has the property that
\[ \mathcal{H}_{x}= T_xM, \ \ \forall\ x\in M \subset G.\]
\end{definition}

There is an equivalent way of looking at connections which uses the vector bundle 
underlying the Lie algebroid of $G$, i.e.
\[ A= \textrm{Ker}(ds)|_{M}.\]
The construction of the Lie algebroid shows that there is  a short exact sequence
of vector bundles over $M$:
\[ t^*A\stackrel{r}{\rmap} TG\stackrel{ds}{\rmap} s^*TM,\]
where $r$ is given by right translations as follows: For $g: x\rmap y$ in $G$, $r_{g}$ is the derivative 
of the right multiplication by $g$, i.e.
\begin{equation*}
r_g= (dR_g)_{1_y}: A_y\rmap T_g G.
\end{equation*} 
With this, we see that the following structures are equivalent:
\begin{itemize}
\item An Ehresmann connection $\mathcal{H}$ on $G$.
\item  A right splitting of the previous sequence, i.e. a section $\sigma: s^*TM\rmap TG$ 
of $(ds)$, which restricts to the natural splitting at the identities.
\item A left splitting $\omega$ of the previous sequence which restricts to the natural one at the identities. Such a splitting can be viewed as a 1-form $\omega\in \Omega^1(G, t^*A)$ 
satisfying $\omega(r(\alpha))= \alpha$ for all $\alpha$.
\end{itemize}
These objects are related by
\[ \omega\circ r= \textrm{Id}, \ (ds)\circ \sigma= \textrm{Id}, \ r\circ \omega+ \sigma\circ (ds)= \textrm{Id}, \ \mathcal{H}= \textrm{Ker}(\sigma)= \textrm{Im}(\omega).\]

From now on, when talking about an Ehresmann connection on $G$, we will make no distinction between the sub-bundle $\mathcal{H}$, 
the splitting $\sigma$ and the form $\omega$. In particular, we will also say that $\sigma$ is a connection on $G$.
Moreover, we will often say connection instead of Ehresmann connection. Observe that an easy partition of unity argument shows that any Lie groupoid
admits an Ehresmann connection.

There is another version of the definition of connections which uses the target map instead of the source.
The two are equivalent: any $\mathcal{H}$ as above induces a sub-bundle
\[ \overline{\mathcal{H}}:= (d\iota)(\mathcal{H})\]
which is complementary to $\textrm{Ker}(dt)$ and, at points $x\in M$, coincides with the image of $T_{x}M$.
Similarly,  there is  a short exact sequence
of vector bundles over $M$:
\[ s^*A\stackrel{l}{\rmap} TG\stackrel{dt}{\rmap} t^*TM\]
where $l$ is given by left translations, and a connection $\mathcal{H}$ is the same as a splitting
$(\overline{\omega}, \overline{\sigma})$ of this sequence. In terms of $(\omega, \sigma)$, one has
\[ \overline{\sigma}_{g}(v)= (d\iota)_{g^{-1}}(\sigma_{g^{-1}}(v)),\ \overline{\omega}_g(X)= \omega_{g^{-1}}(d\iota)_{g}(X).\]

\begin{definition}
Let $\sigma$ be a connection on a Lie groupoid $G$. The adjoint representation of $A$ induced
by $\sigma$ is the representation up to homotopy of $G$ on the graded vector bundle $\Ad=A \oplus TM$, where $A$
is in degree zero and $TM$ in degree one, given by the following structure operator:
 \begin{equation*}
D=\partial+\lambda+K_{\sigma},
\end{equation*}
where $\partial$ is the anchor while $\lambda \in \overline{C}^1_G(\End^0(\Ad))$ and $K_\sigma \in  \hat{C}^2_G(\End^{-1}(\Ad))$ are
\begin{eqnarray*}
 \lambda_g(X)&=& (dt)_g(\sigma_g(X)),\\
 \lambda_g(\alpha)&=& - \omega_g(l_g(\alpha)),\\
K_{\sigma}(g,h)(X)&=&(dm)_{g, h}(\sigma_g(\lambda_h(X)), \sigma_h(X))-
\sigma_{gh}(X).
 \end{eqnarray*}
We denote this representation up to homotopy by $\Ad_{\sigma}(G)$. The isomorphism class of this
representation, which is independent of the connection, will be denoted by $\Ad(G).$
\end{definition}

\section{Differentiation of representations up to homotopy}\label{s:VanEst}

Here we will construct a differentiation functor
\begin{eqnarray*}
 \Psi:\URRep(G) \rightarrow \RRep(A)
\end{eqnarray*}
from the category of unital representations up to homotopy of a Lie groupoid to those
of the Lie algebroid. We prove that for a representation up to homotopy $E$ of 
a Lie groupoid $G$, differentiation yields a van Est homomorphism $\Psi:\hat{C}(G,E) \rightarrow \Omega(A,\Psi(E))$.

\subsection{Differentiating cochains}\label{ss:differentiating_cochains}

We will first explain how to differentiate Lie groupoid cochains to obtain Lie algebroid cochains.
The computations already appeared in \cite{Crainic}. We reproduce them here in order to adapt the sign
conventions to the case of graded vector bundles. In what follows, $G$ is a Lie groupoid over the manifold $M$ with
Lie algebroid $A$.
\begin{definition}
Given a section $\alpha \in \Gamma(A)$, there is a map $R_{\alpha}: \hat{C}^{k+1}(G,E) \to \hat{C}^{k}(G,E)$ defined by the formula
\begin{eqnarray*}
R_{\alpha}(\eta)(g_{1},\dots,g_{k}):=\left. \frac{d}{d\epsilon}\right|_{\epsilon=0}\eta(g_1,\dots,g_{k},\Phi^{\alpha}_{\epsilon}(s(g_{k}))^{-1}).
\end{eqnarray*}
Here $\Phi^{\alpha}_{\epsilon}$ denotes the flow on $G$ of the right invariant vector field on $G$ associated to $\alpha$.
\end{definition}
The map $R_{\alpha}$ can alternatively be described as follows: consider $\eta(g_1,\dots,g_{k},(-)^{-1})$ as a smooth map
from $s^{-1}(s(g_{k}))$ to $E_{t(g_{1})}$.  Differentiating this map at the point $s(g_k)$, we obtain a linear map
from $A_{s(g_{k})}$ to $E_{t(g_{1})}$. If one applies this map to $\alpha(s(g_{k}))$ the result is $R_{\alpha}(\eta)(g_{1},\dots,g_{k})$.
These two descriptions of $R_{\alpha}$ immediately imply:

\begin{lemma}\label{lemma:R}
For $\alpha \in \Gamma(A)$, $h\in C^{\infty}(M)$, $\eta \in \hat{C}^{k}(G,E)$ and $f \in \hat{C}^{l}(G)$
the following identities hold:
\begin{itemize}
\item[a)] $R_{h\alpha}(\eta)=R_{\alpha}(\eta) \star h$.
\item[b)] If $l>0$: $R_{\alpha}(\eta \star f) = (-1)^{k}\eta \star R_{\alpha}(f)$.
\item[c)] If $l=0$: $R_{\alpha}(\eta \star f) = R_{\alpha}(\eta) \star f$.
\end{itemize}
\end{lemma}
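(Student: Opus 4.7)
The plan is to work directly from the two equivalent descriptions of $R_\alpha$ given after Definition, combined with the explicit formula for the $\star$-product from Section \ref{preliminaries}. Each of the three identities reduces to one short calculation, so the ``plan'' is essentially an outline of where normalization, linearity in $\alpha$, and the sign convention in $\star$ enter.

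For part (a) I would use the second, tangential, description of $R_\alpha$: fixing $(g_1,\dots,g_k)$, the map $h\mapsto \eta(g_1,\dots,g_k,h^{-1})$ is a smooth function on $s^{-1}(s(g_k))$, whose differential at $1_{s(g_k)}$ is a linear map $L\colon A_{s(g_k)}\to E_{t(g_1)}$. Then $R_\alpha(\eta)(g_1,\dots,g_k)=L(\alpha(s(g_k)))$, so by linearity $R_{h\alpha}(\eta)(g_1,\dots,g_k) = h(s(g_k))\, L(\alpha(s(g_k)))$, which matches $(R_\alpha(\eta)\star h)(g_1,\dots,g_k)$ because $h\in C^0(G)=C^\infty(M)$ is inserted through $s(g_k)$ and contributes no sign.

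For parts (b) and (c) I would set $h_\epsilon:=\Phi^\alpha_\epsilon(s(g_{\bullet}))^{-1}$ and expand the left-hand side using the $\star$-formula. In case (b), where $l>0$, we have
\[
(\eta\star f)(g_1,\dots,g_{k+l-1},h_\epsilon)=(-1)^{kl}\,\eta(g_1,\dots,g_k)\,f(g_{k+1},\dots,g_{k+l-1},h_\epsilon);
\]
only the $f$-factor depends on $\epsilon$, so differentiating gives $(-1)^{kl}\eta(\cdots)\,R_\alpha(f)(\cdots)$. Comparing with the definition $(\eta\star R_\alpha(f))(g_1,\dots,g_{k+l-1})=(-1)^{k(l-1)}\eta(\cdots)R_\alpha(f)(\cdots)$ produces the sign $(-1)^k$.

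Case (c) is the only one with real content. Now $f\in C^\infty(M)$, so $(\eta\star f)(g_1,\dots,g_{k-1},h_\epsilon)=\eta(g_1,\dots,g_{k-1},h_\epsilon)\cdot f(s(h_\epsilon))$, and both factors depend on $\epsilon$ because $s(h_\epsilon)=t(\Phi^\alpha_\epsilon(s(g_{k-1})))$ varies with $\epsilon$. The product rule produces two terms: the first is $R_\alpha(\eta)(g_1,\dots,g_{k-1})\cdot f(s(g_{k-1}))=(R_\alpha(\eta)\star f)(g_1,\dots,g_{k-1})$, which is what is wanted. The second term carries the factor $\eta(g_1,\dots,g_{k-1},1_{s(g_{k-1})})$, evaluated at $\epsilon=0$; this vanishes because $\eta\in\hat{C}^k(G,E)$ is normalized and $k\geq 1$ (the case $k=l=0$ is excluded as $R_\alpha$ is not defined on $\hat{C}^0$). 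The mild obstacle is precisely this second term, and the point of the lemma is that the normalization hypothesis is exactly what is needed to kill it.
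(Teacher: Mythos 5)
Your proof is correct and is essentially the paper's own argument: the paper simply asserts that the two descriptions of $R_\alpha$ ``immediately imply'' the lemma, adding only the remark that part (c) holds because the cochains are normalized. Your computations---linearity of the differential at the unit for (a), the $\star$-sign bookkeeping for (b), and the product rule with normalization killing the term containing $\eta(g_1,\dots,g_{k-1},1)$ for (c)---are exactly the details the paper suppresses.
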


We observe that the last identity is true because we consider normalized cochains. 
\begin{definition}\label{def:differentiation_cochains}
We define the differentiation map $\Psi: \hat{C}^{k}(G,E^l) \to \Omega^{k}(A,E^l)$ by the formula
\begin{eqnarray*}
\Psi(\eta)(\alpha_1,\dots,\alpha_k):=(-1)^{kl}\sum_{\sigma \in S_k}(-1)^{|\sigma|}R_{\alpha_{\sigma(k)}}\cdots R_{\alpha_{\sigma(1)}}\eta
\end{eqnarray*}
if $k>0$ and to be equal to the identity if $k=0$.
\end{definition}

\begin{lemma}\label{lemma:differentiation_cochains}

 The map $\Psi$ is well-defined, namely $\Psi(\eta)$ is multilinear with respect to functions on $M$.
In the case of trivial coefficients, the map $\Psi:\hat{C}(G)\rightarrow \Omega(A)$ is an algebra map. In general,
$\Psi$ respects the module structure in the sense that for  $\eta \in \hat{C}^{k}(G,E^l)$ and $f \in \hat{C}^{m}(G)$
\begin{eqnarray*}
\Psi(\eta\star f)= \Psi(\eta)\Psi(f)
\end{eqnarray*}
holds.

\end{lemma}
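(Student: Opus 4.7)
The plan has three steps, each relying on Lemma~\ref{lemma:R}.

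For the first claim (well-definedness), I need to show that $\Psi(\eta)(\alpha_1,\dots,\alpha_k)$ is $C^{\infty}(M)$-multilinear in each argument. Let $h\in C^{\infty}(M)$, viewed as an element of $\hat{C}^{0}(G)$. In the expression $R_{\alpha_{\sigma(k)}}\cdots R_{\alpha_{\sigma(1)}}\eta$, suppose $\alpha_i$ is replaced by $h\alpha_i$ at some position. Applying Lemma~\ref{lemma:R}(a) at that position gives $R_{\alpha_i}(\cdot)\star h$, and Lemma~\ref{lemma:R}(c) --- applicable because $h$ has cochain degree zero --- lets me pull $h$ past every subsequent $R_{\alpha_j}$. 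The net effect is a global factor of $h$, proving $C^{\infty}(M)$-linearity in the $i$-th slot. Antisymmetry in the $\alpha_i$'s is built into the alternating sum over $\sigma$, so $\Psi(\eta)$ is an honest $\Omega(A)$-valued form.

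For the module property (which specializes to the algebra property in the trivial-coefficient case), I start from $R_{\alpha_{\sigma(k+m)}}\cdots R_{\alpha_{\sigma(1)}}(\eta\star f)$ and iterate Lemma~\ref{lemma:R}(b). Since $f\in\hat{C}^{m}(G)$ has positive degree until it has been differentiated $m$ times, the innermost $m$ applications $R_{\alpha_{\sigma(1)}},\dots,R_{\alpha_{\sigma(m)}}$ enter $f$, each contributing a sign $(-1)^{k}$; afterwards $f$ has become a function on $M$, and Lemma~\ref{lemma:R}(c) routes the remaining $k$ applications into $\eta$. Thus
\begin{equation*}
R_{\alpha_{\sigma(k+m)}}\cdots R_{\alpha_{\sigma(1)}}(\eta\star f)=(-1)^{km}\bigl(R_{\alpha_{\sigma(k+m)}}\cdots R_{\alpha_{\sigma(m+1)}}\eta\bigr)\star\bigl(R_{\alpha_{\sigma(m)}}\cdots R_{\alpha_{\sigma(1)}}f\bigr).
\end{equation*}
Each $\sigma\in S_{k+m}$ decomposes uniquely as $\sigma_0\cdot(\tau_1\oplus\tau_2)$ with $\sigma_0$ an $(m,k)$-shuffle and $(\tau_1,\tau_2)\in S_m\times S_k$ permuting the two blocks. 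Summing first over $\tau_1$ and $\tau_2$ recovers $\Psi(f)(\alpha_{\sigma_0(1)},\dots,\alpha_{\sigma_0(m)})$ and $\Psi(\eta)(\alpha_{\sigma_0(m+1)},\dots,\alpha_{\sigma_0(m+k)})$ (the latter up to the normalization $(-1)^{kl}$ from Definition~\ref{def:differentiation_cochains}), and the outer sum over $\sigma_0$ then reassembles into the wedge product after an $(m,k)\to(k,m)$ reindexing of the shuffles.

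The main obstacle is sign bookkeeping. Three distinct contributions appear: the $(-1)^{km}$ from iterated application of Lemma~\ref{lemma:R}(b), the $(-1)^{kl}$ and $(-1)^{(k+m)l}$ normalizations coming from $\Psi$, and the $(-1)^{km}$ discrepancy between $(m,k)$- and $(k,m)$-shuffles arising when one reindexes to match the form $\Psi(\eta)\wedge\Psi(f)$. These must be reconciled with the graded-commutative convention $\omega\wedge\eta=(-1)^{kp}\eta\wedge\omega$ used throughout the paper. The role of the $(-1)^{kl}$ twist in the definition of $\Psi$ becomes clear at this point: it is precisely the Koszul twist needed so that all of these signs cancel cleanly and $\Psi(\eta\star f)$ equals $\Psi(\eta)\Psi(f)$ without residual sign, verifying both the module statement and its specialization to the algebra statement in the case of trivial coefficients ($l=0$).
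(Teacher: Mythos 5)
Your proposal is correct and takes essentially the same route as the paper: the paper's own proof derives well-definedness from parts a) and c) of Lemma \ref{lemma:R} exactly as you do, and dispatches the module identity as ``an easy computation'' --- namely the iteration of parts b) and c), the shuffle decomposition of $S_{k+m}$, and the sign bookkeeping that you spell out, with the combined sign $(-1)^{(k+m)l}(-1)^{km}(-1)^{kl}(-1)^{km}=(-1)^{ml}$ matching the Koszul sign in the right $\Omega(A)$-module structure. Your write-up simply fills in the details the paper omits.
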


\begin{proof}
The fact that the map
\begin{eqnarray*}
\Gamma(A)\times \dots \times \Gamma(A) & \to & \Gamma(A)\\
(\alpha_1,\dots,\alpha_k) & \mapsto & \sum_{\sigma \in \Sigma_k}(-1)^{|\sigma|}R_{\alpha_{\sigma(k)}}\cdots R_{\alpha_{\sigma(1)}}\eta
\end{eqnarray*}
is $C^{\infty}(M)$-multilinear, follows from parts $a)$ and $c)$ of  Lemma \ref{lemma:R}.
The second assertion is a particular case of the third one, which follows from an easy computation.
\end{proof}

The sequence $(G_k)_{k\geq 0}$ is a simplicial manifold called the nerve of $G$. The face maps of the simplicial structure are 
given by the formulas
\begin{eqnarray}\label{simplicial faces}
d_i(g_1,\dots,g_k):=
\begin{cases}
(g_2,\dots,g_k) & \textrm{for } i=0,\\
(g_{1},\dots,g_{i}g_{i+1},\dots,g_k) & \textrm{for } 0<i<k,\\
(g_1,\dots,g_{k-1}) & \textrm{for } i=k.
\end{cases}
\end{eqnarray}

\begin{lemma}\label{lemma:R2}
For $f \in \hat{C}^{k}(G)$ the following identities hold:
\begin{enumerate}
\item[a)] $R_{\alpha}(d_{0}^{*}f)=d_{0}^{*}(R_{\alpha}f)$ if $k>0$, 
\item[b)] $R_{\alpha}(d_{0}^{*}f)=L_{\rho(\alpha)}f$ if $k=0$,
\item[c)] $R_{\alpha_{k+1}}\cdots R_{\alpha_1}(d_{0}^{*}f)=L_{\rho(\alpha_{k+1})}(R_{\alpha_{k}}\cdots R_{\alpha_1}f)$,
\item[d)] $R_{\alpha}(d_{k+1}^{*}f)=0$,
\item[e)] $R_{\alpha}(d_{i}^{*}f)=d_{i}^{*}(R_{\alpha}f)$ for $0<i<k$,
\item[f)] $R_{[\alpha,\beta]}f = R_{\alpha}R_{\beta}(d_{k}^{*}f) - R_{\beta}R_{\alpha}(d_{k}^{*}f)$.
\end{enumerate}
\end{lemma}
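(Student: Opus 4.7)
The plan is to dispatch parts (a)--(e) by unwinding the definitions, reserving real work for (f), where the Lie bracket must be extracted from a flow commutator.

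For (a), (d), (e): the operator $R_\alpha$ only probes the last slot, via the tangent of $\epsilon\mapsto\Phi^\alpha_\epsilon(s(g_k))^{-1}$. The faces $d_0$ and $d_i$ with $0<i<k$ either drop the first entry or merge two internal ones, leaving the last argument untouched, so the $R_\alpha$-perturbation commutes with $d_i^*$. The face $d_{k+1}$ drops the last entry entirely, so $d_{k+1}^* f$ is independent of the slot $R_\alpha$ perturbs and hence $R_\alpha(d_{k+1}^* f)=0$. For (b), writing $(d_0^* f)(g)=f(s(g))$ for $f\in C^\infty(M)$, one has $s(\Phi^\alpha_\epsilon(x)^{-1})=t(\Phi^\alpha_\epsilon(x))$, a curve in $M$ starting at $x$ with velocity $\rho(\alpha)(x)$, so $R_\alpha(d_0^* f)(x)=L_{\rho(\alpha)}f(x)$. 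Part (c) then follows by iterating (a) exactly $k$ times and finishing with (b) once the inner cochain has dropped to degree zero.

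For (f), the strategy is to recognize $R_\alpha(d_k^* f)$ as differentiation along a vector field on $G$ acting in the last slot. Introduce the family $\rho^\alpha_\epsilon:G\to G$, $g\mapsto g\cdot\Phi^\alpha_\epsilon(s(g))^{-1}$; right-invariance of $\alpha^R$ (which gives $\Phi^\alpha_\epsilon(g)=\Phi^\alpha_\epsilon(t(g))g$) yields $\rho^\alpha_\epsilon\circ\rho^\alpha_{\epsilon'}=\rho^\alpha_{\epsilon+\epsilon'}$, so $\rho^\alpha$ is a flow with generator $\tilde\alpha=\iota_*\alpha^R$ satisfying $\tilde\alpha(1_y)=(d\iota)_y(\alpha(y))$. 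The key identity
\[
R_\alpha(d_k^* f)(g_1,\dots,g_k)=\tilde\alpha\bigl(f(g_1,\dots,g_{k-1},-)\bigr)(g_k)
\]
follows from $g_k\Phi^\alpha_\epsilon(s(g_k))^{-1}=\rho^\alpha_\epsilon(g_k)$. Applying this once more, and noting that the new last argument $\Phi^\alpha_\eta(s(g_{k-1}))^{-1}=\rho^\alpha_\eta(1_y)$ with $y=s(g_{k-1})$ is again the flow of $\tilde\alpha$ from the identity, one obtains
\[
R_\alpha R_\beta(d_k^* f)(g_1,\dots,g_{k-1})=\tilde\alpha\tilde\beta\bigl(f(g_1,\dots,g_{k-1},-)\bigr)(1_y),
\]
with $\tilde\beta\tilde\alpha$ in place of $\tilde\alpha\tilde\beta$ for $R_\beta R_\alpha$. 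The commutator then satisfies $[\tilde\alpha,\tilde\beta]=\iota_*[\alpha^R,\beta^R]=\iota_*([\alpha,\beta]^R)=\widetilde{[\alpha,\beta]}$, so the difference is the directional derivative at $1_y$ along $\widetilde{[\alpha,\beta]}(1_y)=(d\iota)_y([\alpha,\beta](y))$, which is exactly the tangent of $\epsilon\mapsto\Phi^{[\alpha,\beta]}_\epsilon(y)^{-1}$ at $\epsilon=0$. This matches $R_{[\alpha,\beta]}f(g_1,\dots,g_{k-1})$.

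The main obstacle is (f): verifying that $\rho^\alpha$ is a flow and that its generator equals $\iota_*\alpha^R$ needs two careful applications of right-invariance, and one must check that iterating the $R$-operators genuinely realises successive action of $\tilde\alpha,\tilde\beta$ on the last argument without introducing any spurious terms.
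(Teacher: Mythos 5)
Your proposal is correct and takes essentially the same route as the paper: parts (a)--(e) by direct unwinding of the definitions, and part (f) by using right-invariance of the flows to recognize the iterated $R$-operators as iterated Lie derivatives and then invoking $[\hat{\alpha},\hat{\beta}]=\widehat{[\alpha,\beta]}$. The only cosmetic difference is that the paper absorbs the inversion into the cochain (setting $\eta(g)=f(g_1,\dots,g_{k-1},g^{-1})$ and differentiating along the right-invariant fields themselves), whereas you conjugate the vector fields by $\iota$ (working with $\tilde{\alpha}=\iota_{*}\hat{\alpha}$ and its flow $g\mapsto g\,\Phi^{\alpha}_{\epsilon}(s(g))^{-1}$); the two computations are equivalent under the inversion map.
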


\begin{proof}
All but the last assertion follow from easy calculations. By fixing the first $k-1$ arguments, the last claim can be reduced to the case $\eta \in \hat{C}^{1}(G)$. For this case it is enough to show that
\begin{eqnarray*}
\left. \frac{d}{d\epsilon}\right|_{\epsilon =0}\eta(\Phi^{[\alpha,\beta]}_{\epsilon}(x))=\left. \frac{d}{d\tau}\frac{d}{d\epsilon}\right|_{\tau=0,\epsilon=0}\left( \eta\left(\Phi^{\beta}_{\epsilon}(t(\Phi^{\alpha}_{\tau}(x)))\bullet \Phi^{\alpha}_{\tau}(x)\right) 
-\eta\left(\Phi^{\alpha}_{\epsilon}(t(\Phi^{\beta}_{\tau}(x)))\bullet \Phi^{\beta}_{\tau}(x)\right)  \right).
\end{eqnarray*}
Here $\bullet$ denotes the composition of the Lie groupoid $G$. 
Since $\Phi_\epsilon^\alpha$  and $\Phi_\epsilon^\beta$  are flows of right invariant vector fields, we know that
\begin{eqnarray*}
\Phi^{\beta}_{\epsilon}(t(\Phi^{\alpha}_{\tau}(x)))\bullet \Phi^{\alpha}_{\tau}(x)=\Phi^{\beta}_{\epsilon}(\Phi^{\alpha}_{\tau}(x)),\\
\Phi^{\alpha}_{\epsilon}(t(\Phi^{\beta}_{\tau}(x)))\bullet \Phi^{\beta}_{\tau}(x)=\Phi^{\alpha}_{\epsilon}(\Phi^{\beta}_{\tau}(x)).
\end{eqnarray*}
Inserting these two equations in the first equation above, we obtain
\begin{eqnarray*}
L_{\widehat{[\alpha, \beta]}}\eta(x)=L_{\hat{\alpha}}L_{\hat{\beta}}\eta (x) - L_{\hat{\beta}}L_{\hat{\alpha}}\eta(x),
\end{eqnarray*}
where the $\hat{\alpha}$ and $\hat{\beta}$  denote the right invariant vector fields associated to $\alpha$ and $\beta$.
\end{proof}

\begin{proposition}\label{proposition:Phi}
The map $\Psi: (\hat{C}^{\bullet}(G),\delta) \to (\Omega^{\bullet}(A),d)$ is a morphism of differential graded algebras.
\end{proposition}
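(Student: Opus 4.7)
\medskip

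\noindent\emph{Plan.} The algebra-homomorphism property of $\Psi$ is already established in Lemma \ref{lemma:differentiation_cochains}, so what remains is the compatibility with differentials: $\Psi(\delta f) = d(\Psi f)$ for all $f \in \hat{C}^{k}(G)$. My plan is to expand the coboundary as the alternating sum
\[\delta f = (-1)^{k} d_{0}^{*}f + \sum_{i=1}^{k} (-1)^{i+k} d_{i}^{*}f - d_{k+1}^{*}f\]
of pullbacks along the simplicial face maps, then apply the alternating operator $\sum_{\sigma \in S_{k+1}}(-1)^{|\sigma|} R_{\alpha_{\sigma(k+1)}} \cdots R_{\alpha_{\sigma(1)}}$ defining $\Psi$ to each summand, and match the result with the Koszul expansion of $d(\Psi f)(\alpha_{1},\dots,\alpha_{k+1})$.

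Two pieces are immediate from Lemma \ref{lemma:R2}. The summand $d_{k+1}^{*}f$ contributes nothing, since the innermost $R_{\alpha_{\sigma(1)}}$ already annihilates $d_{k+1}^{*}f$ by part (d). For the summand $(-1)^{k}d_{0}^{*}f$, the iterated identity (c) gives $R_{\alpha_{k+1}} \cdots R_{\alpha_{1}}(d_{0}^{*}f) = L_{\rho(\alpha_{k+1})}(R_{\alpha_{k}}\cdots R_{\alpha_{1}}f)$, and stratifying the sum over $\sigma \in S_{k+1}$ by the value $i = \sigma(k+1)$, while accounting for the sign $(-1)^{k+1-i}$ of moving $i$ to the last slot, reproduces exactly the Lie-derivative part $\sum_{i}(-1)^{i+1} L_{\rho(\alpha_{i})}\Psi(f)(\alpha_{1},\dots,\hat{\alpha}_{i},\dots,\alpha_{k+1})$ of the Koszul formula.

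The substantive task is to show that the middle summands $\sum_{i=1}^{k}(-1)^{i+k} d_{i}^{*}f$ collectively reproduce the bracket part $\sum_{i<j}(-1)^{i+j}\Psi(f)([\alpha_{i},\alpha_{j}],\alpha_{1},\dots,\hat{\alpha}_{i},\dots,\hat{\alpha}_{j},\dots,\alpha_{k+1})$. My strategy is to apply identity (e) to push the outer $R$'s past $d_{i}^{*}$ whenever the interior index condition permits, thereby reducing to configurations in which two adjacent $R$'s meet a boundary face $d_{\ell}^{*}$ where (e) no longer applies; at that point identity (f), $R_{\alpha}R_{\beta}(d_{\ell}^{*}g) - R_{\beta}R_{\alpha}(d_{\ell}^{*}g) = R_{[\alpha,\beta]}g$, converts the adjacent pair into a bracket, while the remaining chain of $R$'s acting on $R_{[\alpha,\beta]}g$ builds up precisely the $\Psi(f)([\alpha_{i},\alpha_{j}],\ldots)$-type expression. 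The antisymmetrization inherent in $\Psi$ then performs the pairing of indices that produces the Koszul bracket sum.

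The main obstacle is precisely this last step: carrying out the combinatorial bookkeeping so that the signs $(-1)^{i+k}$ from $\delta$, $(-1)^{|\sigma|}$ from the alternation, and the antisymmetry of (f) collapse into the Koszul sign $(-1)^{i+j}$, while the surviving permutations of the remaining $k-1$ sections correctly assemble into $\Psi(f)([\alpha_{i},\alpha_{j}],\dots)$. As a sanity check I would first run the computation explicitly for $k=1$, where the identity reduces to $L_{\rho(\alpha)}R_{\beta}f - L_{\rho(\beta)}R_{\alpha}f - R_{[\alpha,\beta]}f$ on both sides, before tackling the general case by induction on $k$ or by direct combinatorial expansion.
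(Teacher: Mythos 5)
Your proposal follows essentially the same route as the paper's proof: expand $\delta f$ into the face-map pullbacks $d_i^*f$, kill the $d_{k+1}^*$ term by Lemma \ref{lemma:R2}(d), turn the $d_0^*$ term into the Lie-derivative part of the Koszul formula via part (c), and assemble the middle terms $d_i^*f$, $0<i<k+1$, into the bracket part via parts (e) and (f). The paper likewise leaves the final sign bookkeeping as an ``easy calculation using Lemma \ref{lemma:R2},'' so your argument matches it both in strategy and in level of detail.
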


\begin{proof}
It remains to check that $\Psi$ is a chain map. To this end we pick $f \in \hat{C}^{k}(G)$ and compute
\begin{eqnarray*}
\Psi(\delta f)(\alpha_1,\dots,\alpha_{k+1})&=&\sum_{\sigma \in S_{k+1}}(-1)^{|\sigma|} R_{\alpha_{\sigma(k+1)}}\cdots R_{\alpha_{\sigma(1)}}(\delta(f))\\
&=&\sum_{i=0}^{k+1}\sum_{\sigma \in S_{k+1}}(-1)^{i+k+|\sigma|} R_{\alpha_{\sigma(k+1)}}\cdots R_{\alpha_{\sigma(1)}}(d_{i}^{*}f)\\
&=& \sum_{\sigma\in S_{k+1}}(-1)^{|\sigma|+k}R_{\alpha_{\sigma(k+1)}}\cdots R_{\alpha_{\sigma(1)}}(d_0^{*}f)\\
&& + \sum_{i=1}^{k}\sum_{\sigma \in S_{k+1}}(-1)^{i+k+|\sigma|}R_{\alpha_{\sigma(k+1)}}\cdots R_{\alpha_{\sigma(1)}}(d_{i}^{*}f).
\end{eqnarray*}
An easy calculation using Lemma \ref{lemma:R2} yields the following identities:
\begin{eqnarray*}
\sum_{\sigma\in S_{k+1}}(-1)^{|\sigma|+k}R_{\alpha_{\sigma(k+1)}}\cdots R_{\alpha_{\sigma(1)}}(d_0^{*}f) =
\sum_{i=1}^{k+1}(-1)^{i+1}L_{\rho(\alpha_i)}\Psi(f)(\alpha_1,\dots,\hat{\alpha}_i,\dots,\alpha_{(k+1)}),\\
\sum_{i=1}^{k}\sum_{\sigma \in S_{k+1}}(-1)^{i+k+|\sigma|}R_{\alpha_{\sigma(k+1)}}\cdots R_{\alpha_{\sigma(1)}}(d_{i}^{*}f) = \sum_{i<j}(-1)^{i+j}\Psi(f)([\alpha_{i},\alpha_{j}],\dots,\hat{\alpha}_i,\cdots,\hat{\alpha_j},\dots, \alpha_{k+1}).
\end{eqnarray*}
\\
These equations imply that $\Psi(\delta f)=d\Psi(f)$ holds.
\end{proof}

\subsection{Differentiating representations up to homotopy}\label{ss:differentiating_reps}

Here we will show that the structure operators for a representation up to homotopy of a Lie groupoid can
be differentiated to obtain a representation up to homotopy of the algebroid.

\begin{lemma}\label{lemma:hR}
Given $\alpha \in \Gamma(A)$ there is an operator
\begin{equation*}
\hat{R}_\alpha: \hat{C}^k_G(\End(E) )\rightarrow \hat{C}^{k-1}_G(\End(E))
\end{equation*}
defined by the formula
\begin{eqnarray*}
\hR_{\alpha}(F)(g_1,\dots,g_{k-1})(v):=\left. \frac{d}{d\epsilon}\right|_{\epsilon=0}F(g_1,\dots,g_{k-1},(\Phi^{\alpha}_{\epsilon}(x))^{-1}) (\gamma(t(\Phi^{\alpha}_{\epsilon}(x)))),
\end{eqnarray*}
where $x=s(g_{k-1})$, $v \in E_x$ and $\gamma \in \Gamma(E)$ is any section such that $\gamma(x)=v$. 
Moreover, for any  $h\in C^{\infty}(M)$ the following identities hold:
\begin{itemize}
\item[a)] $\hR_{h\alpha}(F)=s^*(h) \hR_{\alpha}(F)$,
\item[b)] $\hR_{\alpha}(s^{*}(h) F) = s^*(h) \hR_{\alpha}(F)$.
\end{itemize}
\end{lemma}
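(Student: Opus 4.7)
The proof breaks into three checks: (i) the formula is well-defined, i.e.\ does not depend on the choice of extension $\gamma$ of $v$; (ii) the result lies in the normalized subspace $\hat{C}^{k-1}_G(\End(E))$; and (iii) the two identities a) and b).

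The crucial input for (i) is the normalization of $F$: at $\epsilon=0$ the arrow $(\Phi^\alpha_\epsilon(x))^{-1}$ is the unit $1_x$, so $F(g_1,\ldots,g_{k-1},1_x)=0$. Given two sections $\gamma_1,\gamma_2 \in \Gamma(E)$ with $\gamma_1(x)=\gamma_2(x)=v$, the difference $\gamma_1-\gamma_2$ vanishes at $x$. Working in local frames of $E$ near $x$ and near $t(g_1)$, one expands
\[
F(g_1,\ldots,g_{k-1},(\Phi^\alpha_\epsilon(x))^{-1})\bigl((\gamma_1-\gamma_2)(t(\Phi^\alpha_\epsilon(x)))\bigr)
\]
as a sum of products of two scalar $\epsilon$-functions, each vanishing at $\epsilon=0$; by the Leibniz rule the derivative at $\epsilon=0$ vanishes, establishing (i). For (ii), if some $g_i$ with $1\le i\le k-1$ equals a unit, then normalization of $F$ makes the $\epsilon$-curve identically zero, hence so is its derivative.

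For identity a), the right-invariant vector field associated to $h\alpha$ equals $(t^*h)\hat\alpha$ on $G$; in particular its value at $1_x$ is $h(x)\hat\alpha(1_x)$, and the chain rule applied to the defining formula for $\hR_{h\alpha}(F)$ produces the extra factor $h(x)=s^*(h)(g_1,\ldots,g_{k-1})$. For identity b), the relation $s((\Phi^\alpha_\epsilon(x))^{-1})=t(\Phi^\alpha_\epsilon(x))$ allows us to rewrite
\[
(s^*(h)\,F)(g_1,\ldots,g_{k-1},(\Phi^\alpha_\epsilon(x))^{-1}) = h(t(\Phi^\alpha_\epsilon(x)))\,F(g_1,\ldots,g_{k-1},(\Phi^\alpha_\epsilon(x))^{-1}).
\]
The Leibniz rule at $\epsilon=0$, combined with $F(g_1,\ldots,g_{k-1},1_x)=0$, kills the term where $h\circ t\circ\Phi^\alpha_\epsilon$ is differentiated, leaving precisely $h(x)\,\hR_\alpha(F) = s^*(h)\,\hR_\alpha(F)$.

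The only delicate step is (i): one must combine the normalization of $F$ with the vanishing of $\gamma_1-\gamma_2$ at $x$ via the product rule in order to cancel the dependence on the extension. Everything else reduces to a direct application of the chain and product rules.
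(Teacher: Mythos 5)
Your proof is correct and follows essentially the same route as the paper's: well-definedness via the Leibniz rule, with the normalization of $F$ killing the extra term, a flow-reparametrization/chain-rule argument for a), and Leibniz plus normalization again for b). The only differences are cosmetic --- the paper reduces well-definedness to replacing $\gamma$ by $f\gamma$ with $f(x)=1$ instead of using local frames and a difference of sections, and it leaves a), b) and the normalization of the output as routine checks, which you spell out.
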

\begin{proof}
In order to prove that the operator is well defined we need to show that it is independent of the choice of section
$\gamma \in \Gamma(E)$. It is enough to prove that one can replace $\gamma$ by $\gamma ' =f \gamma$ where $f \in C^{\infty}(M)$
is any function with $f(x)=1$. For this we compute
\begin{eqnarray*}
\left. \frac{d}{d\epsilon} \right|_{\epsilon=0}F(g_1,&\dots&,g_{k-1},(\Phi^{\alpha}_{\epsilon}(x))^{-1}) (\gamma'(t(\Phi^{\alpha}_{\epsilon}(x))))=\\
&=&\left. \frac{d}{d\epsilon}\right|_{\epsilon=0}f(t(\Phi^{\alpha}_{\epsilon}(x)))F(g_1,\dots,g_{k-1},(\Phi^{\alpha}_{\epsilon}(x))^{-1}) ( \gamma(t(\Phi^{\alpha}_{\epsilon}(x))))\\
&=&\left. \frac{d}{d\epsilon}\right|_{\epsilon=0}f(x)F(g_1,\dots,g_{k-1},(\Phi^{\alpha}_{\epsilon}(x))^{-1}) ( \gamma(t(\Phi^{\alpha}_{\epsilon}(x))))\\
&&+\left. \frac{d}{d\epsilon}\right|_{\epsilon=0}f(t(\Phi^{\alpha}_{\epsilon}(x)))F(g_1,\dots,g_{k-1},x) ( \gamma(x))\\
&=&\left. \frac{d}{d\epsilon}\right|_{\epsilon=0}F(g_1,\dots,g_{k-1},(\Phi^{\alpha}_{\epsilon}(x))^{-1}) ( \gamma(t(\Phi^{\alpha}_{\epsilon}(x)))).\\
\end{eqnarray*}
Note that in the last step we used that $F$ is normalized. Part $a)$ is a consequence of the fact that $\Phi^{h\alpha}_{\epsilon}(x)=\Phi^{\alpha}_{h(x)\epsilon}(x)$. The last claim follows from a simple computation using again that $F$ is normalized.
\end{proof}

We establish some further properties of the maps $\hR_{\alpha}$, which will be useful for later computations.

\begin{lemma}\label{lemma:hR2}
For $F\in \hat{C}_{G}^{k}(\End^{m}(E))$ and $F'\in \hat{C}_{G}^{k'}(\End^{m'}(E))$ the following identities hold:
\begin{itemize}
\item[a)] $\hR_{\alpha}(F\circ F')=F \circ (\hR_{\alpha}F')$ if $k'>0$,
\item[b)] $\hR_{\alpha}(F\circ F')=(\hR_{\alpha}F) \circ F'$ if $k'=0$,
\item[c)] $\hR_{\alpha}(d_j^{*}F)=d_j^{*}(\hR_{\alpha}F)$ if $0<j<k$,
\item[d)] $\hR_{[\alpha,\beta]}F=\hR_{\alpha}\left(\hR_{\beta} (d_{k}^{*}F)\right) - \hR_{\beta}\left(\hR_{\alpha}(d_{k}^{*}F) \right)$.
\end{itemize}
\end{lemma}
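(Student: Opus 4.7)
The plan is to follow the pattern of Lemma \ref{lemma:R2}, generalized to the endomorphism-valued setting. Parts (a), (b) and (c) reduce to direct calculations from the definition of $\hR_\alpha$ (Lemma \ref{lemma:hR}) together with the chain rule; only (d) requires real work.

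For (a), since $k' > 0$, the convolution formula places the perturbed last entry $(\Phi_\epsilon^\alpha(x))^{-1}$ in the $F'$-block of $F \circ F'$, so differentiating at $\epsilon = 0$ acts only on $F'$ and yields $F \circ (\hR_\alpha F')$. For (b), when $k' = 0$ the endomorphism $F'$ is a section over $M$ and $(F \circ F')(g_1,\ldots,g_k) = F(g_1,\ldots,g_k) \circ F'(s(g_k))$. Introducing the local section $\tilde\gamma(y) := F'(y)\gamma(y)$, which extends $F'(x)(v)$ around $x$, the defining expression for $\hR_\alpha(F \circ F')$ matches that for $\hR_\alpha(F)$ evaluated on $\tilde\gamma$, producing $(\hR_\alpha F) \circ F'$. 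For (c), since $0 < j < k$, the internal multiplication $g_j g_{j+1}$ lies in a slot disjoint from the last slot touched by $\hR_\alpha$, and the chain rule gives the commutation.

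The substance of the lemma is (d), which I would treat by mirroring the proof of Lemma \ref{lemma:R2}(f). Setting $y = s(g_{k-1})$ and $v = \gamma(y)$ for a local section $\gamma$ of $E$, unpacking the two levels of $\hR_\bullet$ and the face map $d_k^*$ rewrites $\hR_\alpha \hR_\beta(d_k^* F)(g_1,\ldots,g_{k-1})(v)$ as the mixed partial at $(\tau,\epsilon) = (0,0)$ of
\begin{equation*}
 F\bigl(g_1,\ldots,g_{k-1},\,\Phi_\tau^\alpha(y)^{-1}\Phi_\epsilon^\beta(t(\Phi_\tau^\alpha(y)))^{-1}\bigr)\bigl(\gamma(t(\Phi_\epsilon^\beta(t(\Phi_\tau^\alpha(y)))))\bigr).
\end{equation*}
The right-invariance identity
\begin{equation*}
 \Phi_\epsilon^\beta(t(\Phi_\tau^\alpha(y))) \bullet \Phi_\tau^\alpha(y) = \Phi_\epsilon^\beta(\Phi_\tau^\alpha(y))
\end{equation*}
used in the proof of Lemma \ref{lemma:R2}(f) collapses the last $F$-entry to $\Phi_\epsilon^\beta(\Phi_\tau^\alpha(y))^{-1}$ (after passing to inverses), and subtracting the $\alpha \leftrightarrow \beta$ counterpart turns the difference of mixed partials into the derivative along the commutator vector field $[\hat\alpha,\hat\beta] = \widehat{[\alpha,\beta]}$ applied to the last slot. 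This is precisely $\hR_{[\alpha,\beta]} F$.

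The main obstacle is keeping track of the contributions produced by differentiating the extension $\gamma$, since its argument $t(\Phi_\epsilon^\beta(t(\Phi_\tau^\alpha(y))))$ depends on both $\tau$ and $\epsilon$. The key point is that, as already exploited in the proof of Lemma \ref{lemma:hR}, the value of $\hR_\alpha(F)$ does not depend on the extension beyond its value at the base point; because $F$ is normalized, the cross-terms involving derivatives of $\gamma$ either vanish or are symmetric in $\alpha$ and $\beta$, and are therefore killed by the antisymmetrization, leaving only the bracket contribution claimed in (d).
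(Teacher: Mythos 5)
Your proof is correct and takes essentially the same route as the paper: a)--c) follow by direct computation from the definition of $\hR_\alpha$, and d) is obtained by collapsing the composed flows via right-invariance and invoking the identity $L_{[\hat{\alpha},\hat{\beta}]}=L_{\hat{\alpha}}L_{\hat{\beta}}-L_{\hat{\beta}}L_{\hat{\alpha}}$, exactly as in part f) of Lemma \ref{lemma:R2}. One simplification for your last paragraph: the cross-terms you worry about never actually arise if you package $g\mapsto F(g_1,\dots,g_{k-1},g^{-1})\bigl(\gamma(t(g))\bigr)$ into a single $E_{t(g_1)}$-valued function on the source fiber $s^{-1}(y)$ and use the same section $\gamma$ at both levels of differentiation (legitimate precisely because of the well-definedness established in Lemma \ref{lemma:hR}), so that d) is literally the commutator identity applied to that function, with no need for the vanishing/symmetry claim about derivatives of the extension.
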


\begin{proof}
All the statements are established in a straightforward manner except for the last one. 
The identity $d)$ follows form the fact that the Lie derivative of a bracket is the bracket of the Lie derivatives, as in the proof of Lemma \ref{lemma:R2}.
\end{proof}

\begin{definition}\label{def:differentiation_end_valued_things}
The map $\hPsi: \hat{C}_G^k(\End^m(E)) \to \Omega^k(A,\End^m(E))$ is defined by
\begin{eqnarray*}
\hPsi(F)(\alpha_1,\dots,\alpha_k):= (-1)^{km} \sum_{\sigma \in S_k}(-1)^{\sigma}\hR_{\alpha_{\sigma(k)}} \cdots \hR_{\alpha_{\sigma(1)}}F.
\end{eqnarray*}
\end{definition}

We observe that this map is well defined in view of Lemma \ref{lemma:hR}. 
\begin{definition}
For any graded vector bundle $E$, we will denote by $\Pi(A,E)$ the space of all $A$-connections on $E$ that
respect the degree. There is a map
\begin{equation*}
\overline{\Psi}: \overline{C}^1_G(\End^0(E)) \rightarrow \Pi(A,E)
\end{equation*}
given by the formula
\begin{equation*}
(\overline{\Psi}{F})_\alpha(\gamma):=\left. \frac{d}{d\epsilon}\right|_{\epsilon=0}F((\Phi^{\alpha}_{\epsilon}(x))^{-1}) ( \gamma(t(\Phi^{\alpha}_{\epsilon}(x)))),
\end{equation*}
for $\alpha \in \Gamma(A)$ and $\gamma \in \Gamma(E)$.
\end{definition}

\begin{lemma}\label{corollary:hPhi}
Suppose that $F \in \hat{C}^k_G(\End^m(E))$, $F' \in \hat{C}^{k'}_G(\End^{m'}(E))$, $F_1 \in  \overline{C}^1_G(\End^0(E))$ and $\nabla=\overline{\Psi}(F_1)\in \Pi(A,E)$.
Then, the sum
\begin{equation*}
-F_1\ \circ F+ \sum_{j=1}^{k}(-1)^{j+1}(d_{j}^{*}F) + (-1)^{k} F \circ F_1
\end{equation*}
belongs to the normalized subspace $ \hat{C}^{k+1}_G(\End^{m}(E))$. Moreover, the following equations hold:
\begin{itemize}
\item[a)] $\hPsi(F\circ F')=(-1)^{k(k'+m')}\hPsi(F) \wedge \hPsi(F')$,
\item[b)]$\hat{\Psi}\big(-F_1\ \circ F+ \sum_{j=1}^{k}(-1)^{j+1}(d_{j}^{*}F) + (-1)^{k} F \circ F_1\big)=(-1)^{k+m+1}d_{\nabla}(\hat{\Psi}(F)).$
\end{itemize}

\end{lemma}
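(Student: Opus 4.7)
I would first verify that the expression $Y := -F_1\circ F + \sum_{j=1}^{k}(-1)^{j+1}d_j^*F + (-1)^k F\circ F_1$ lies in the normalized subspace. Evaluating $Y$ at a tuple in which some entry $g_i$ equals the identity: for $1<i<k+1$, the faces $d_{i-1}^*F$ and $d_i^*F$ both yield $F(g_1,\dots,g_{i-1},g_{i+1},\dots,g_{k+1})$ and enter with opposite signs $(-1)^i$ and $(-1)^{i+1}$, so they cancel, while all remaining summands vanish because $F$ is normalized. For $i=1$, the term $-F_1(1_{t(g_2)})\circ F(g_2,\dots,g_{k+1})= -F(g_2,\dots,g_{k+1})$ (using $F_1(1)=\id$) cancels the contribution of $d_1^*F=F(g_2,\dots,g_{k+1})$; the case $i=k+1$ is symmetric, pairing $(-1)^k F\circ F_1$ with $(-1)^{k+1}d_k^*F$.

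\textbf{Part (a).} I would apply Lemma \ref{lemma:hR2}(a) iteratively to an ordered composition $\hR_{\alpha_{\sigma(k+k')}}\cdots\hR_{\alpha_{\sigma(1)}}(F\circ F')$: so long as the right factor still has positive degree, $\hR$ passes through $F$ to act on $F'$; this continues for the innermost $k'$ operators, bringing the right factor to degree zero, at which point Lemma \ref{lemma:hR2}(b) forces the remaining $k$ operators to act on $F$. Hence
\[
\hR_{\alpha_{\sigma(k+k')}}\cdots\hR_{\alpha_{\sigma(1)}}(F\circ F') = \bigl(\hR_{\alpha_{\sigma(k+k')}}\cdots\hR_{\alpha_{\sigma(k'+1)}}F\bigr)\circ\bigl(\hR_{\alpha_{\sigma(k')}}\cdots\hR_{\alpha_{\sigma(1)}}F'\bigr).
\]
Summing over $\sigma\in S_{k+k'}$ with signs and reorganizing as a sum over $(k,k')$-shuffles (the $k'$-subset of $\{1,\dots,k+k'\}$ routed into $F'$) one recovers the wedge product $\hPsi(F)\wedge\hPsi(F')$ up to the overall sign $(-1)^{k(k'+m')}$; this sign records the block-swap of the shuffle together with the grading prefactors $(-1)^{km}$, $(-1)^{k'm'}$, $(-1)^{(k+k')(m+m')}$ appearing in the three instances of $\hPsi$.

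\textbf{Part (b).} The key observation is that $Y$ is a \emph{twisted coboundary} of $F$: it mimics $\delta F$ but with the extreme faces $d_0^*F$ and $d_{k+1}^*F$ replaced by the connection corrections $F_1\circ F$ and $F\circ F_1$. I would mirror the proof of Proposition \ref{proposition:Phi} by computing $\hPsi$ of each summand separately. For $\hPsi(F_1\circ F)$ the splitting argument from (a) applies, except that now the outermost $\hR$ hits $F_1\in\overline{C}^1_G(\End^0(E))$ and, by definition of $\overline{\Psi}$, produces $\nabla_\alpha=\overline{\Psi}(F_1)_\alpha$; anti-symmetrization then yields
\[
\hPsi(F_1\circ F)(\alpha_1,\dots,\alpha_{k+1}) = (-1)^m\sum_{i=1}^{k+1}(-1)^{k+1-i}\nabla_{\alpha_i}\bigl(\hPsi(F)(\alpha_1,\dots,\hat{\alpha}_i,\dots,\alpha_{k+1})\bigr),
\]
and the symmetric computation gives the analogous formula for $\hPsi(F\circ F_1)$ with $\nabla_{\alpha_i}$ acting on the right. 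Using the graded commutator $[\nabla_{\alpha_i},-]=\nabla_{\alpha_i}\circ(-)-(-1)^m(-)\circ\nabla_{\alpha_i}$ on $\End^m(E)$, these two contributions recover the connection part of $(-1)^{k+m+1}d_\nabla\hPsi(F)$. For the remaining terms $\sum_{j=1}^k(-1)^{j+1}\hPsi(d_j^*F)$ I would run exactly the calculation of Proposition \ref{proposition:Phi}: Lemma \ref{lemma:hR2}(c) lets $\hR$'s commute with interior $d_j^*$ while the degree constraint permits, and Lemma \ref{lemma:hR2}(d) produces a bracket $\hR_{[\alpha_i,\alpha_l]}$ as soon as the face reaches its last-touching position; after anti-symmetrization these collect into the bracket terms $\sum_{i<l}(-1)^{i+l}\hPsi(F)([\alpha_i,\alpha_l],\dots,\hat{\alpha}_i,\dots,\hat{\alpha}_l,\dots)$ of $d_\nabla\hPsi(F)$.

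\textbf{Main obstacle.} The structural content is a direct adaptation of Proposition \ref{proposition:Phi}, with the endpoint contributions of $\delta F$ replaced by the connection corrections. The hardest step is the careful bookkeeping of signs: they must combine the factor $(-1)^{km}$ in the definition of $\hPsi$, the graded commutator on $\End^m(E)$, the shuffle signs from anti-symmetrization, and the orientation reversals between the iterated $\hR$ ordering and the wedge-product ordering, ultimately producing the prefactor $(-1)^{k+m+1}$ displayed on the right-hand side of (b).
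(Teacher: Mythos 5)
Your proposal is correct and takes essentially the same route as the paper's proof: there too, part (a) is deduced from Lemma \ref{lemma:hR2}, and part (b) is proved by matching $-F_1\circ F$, the interior faces, and $(-1)^k F\circ F_1$ with, respectively, the left-connection block, the bracket block, and the right-connection block in the expansion of $d_{\nabla}(\hPsi(F))$; your write-up is in fact more detailed than the paper's, which checks normalization ``by inspection'' and cites Lemma \ref{lemma:hR2} for (a) without further comment. One correction: your displayed formula for $\hPsi(F_1\circ F)$, together with its mirror for $\hPsi(F\circ F_1)$ (where the sign pattern is $(-1)^{i-1}$ rather than $(-1)^{k+1-i}$, since there the operator hitting $F_1$ is the innermost one), assembles into the \emph{ungraded} commutator $\nabla_{\alpha_j}\circ(-) - (-)\circ\nabla_{\alpha_j}$ --- which is precisely the convention the paper uses when it writes $d_{\nabla}(\hPsi(F))$ as the sum $A+B+C$ --- so your parenthetical appeal to the graded commutator carrying the extra sign $(-1)^m$ contradicts your own formulas (and statement (b)) when $m$ is odd, and should be dropped.
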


\begin{proof}
The first claim can be checked by inspection. Part $a)$ follows from Lemma \ref{lemma:hR2}. Concerning part $b)$, one computes
\begin{eqnarray*}
\vspace{-1cm} d_{\nabla}(\hPsi(F))(\alpha_1,\dots,\alpha_{(k+1)})&=&\underbrace{\sum_{j=1}^{k+1}(-1)^{(j+1)}\nabla_{\alpha_j}\circ \hPsi(F)(\alpha_1,\dots,\hat{\alpha_j},\dots,\alpha_{k+1})}_A+\\
&&\underbrace{ \sum_{i<j}(-1)^{i+j}\hPsi(F)([\alpha_i,\alpha_j],\dots,\hat{\alpha}_i,\dots,\hat{\alpha_j},\dots,\alpha_{k+1})}_B +\\
&&\underbrace{ \sum_{j=1}^{k+1}(-1)^{j}\hPsi(F)(\alpha_1,\dots,\hat{\alpha}_j,\dots,\alpha_{k+1}) \circ \nabla_{\alpha_j}}_C.
\end{eqnarray*}
On the other hand, differentiating $$-F_1\ \circ F+ \sum_{j=1}^{k}(-1)^{j+1}(d_{j}^{*}F) + (-1)^{k} F \circ F_1,$$ and evaluating on $\alpha_1,\dots,\alpha_{k+1}$, we observe
that the part coming from $-F_1\ \circ F$ gives $(-1)^{k+m+1}A$, the part coming from $ \sum_{j=1}^{k}(-1)^{j+1}(d_{j}^{*}F)$ gives  $(-1)^{k+m+1}B$ and the part
coming from $ (-1)^{k} F \circ F_1$ gives  $(-1)^{k+m+1}C$.
\end{proof}
\begin{corollary}\label{Hom}
Let $E$ and $E'$ be graded vector bundles over $M$ and suppose that we are given $F_1\in  \overline{C}^1_G(\End^0(E))$ and $F'_1 \in  \overline{C}^1_G(\End^0(E'))$.
Then there is a map
\begin{equation*}
\hat{\Psi}: \hat{C}_G(\Hom(E,E')) \rightarrow \Omega(A, \Hom(E,E'))
\end{equation*}
given by the same formulas as in Definition \ref{def:differentiation_end_valued_things}. If we denote by $\nabla$ the $A$-connection induced on $E \oplus E'$
by differentiating $F_1$ and $F'_1$, then the map $\hat{\Psi}$ satisfies the equations
 \begin{itemize}
\item[a)] $\hPsi(T \circ F)=(-1)^{a(k+m)}\hPsi(T) \wedge \hPsi(F')$,
\item[b)] $\hPsi(F'\circ T)=(-1)^{k'(a+b)}\hPsi(F') \wedge \hPsi(T)$,
\item[c)]$\hat{\Psi}\big(-F'_1\ \circ T+ \sum_{j=1}^{k}(-1)^{j+1}(d_{j}^{*}T) + (-1)^{k} T \circ F_1\big)=(-1)^{k+m+1}d_{\nabla}(\hat{\Psi}(T)),$
\end{itemize}
for $F \in  \hat{C}^k_G(\End^m( E))$, $F' \in  \hat{C}^{k'}_G(\End^{m'}( E))$ and $T \in  \hat{C}^a_G(\Hom^b(E,E'))$
\end{corollary}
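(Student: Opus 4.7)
The strategy I would follow is to reduce everything to the endomorphism case treated in Lemma \ref{corollary:hPhi} by means of the embedding
\[
\Hom(E,E') \hookrightarrow \End(E \oplus E'),
\]
which realizes $\Hom(E,E')$ as the off-diagonal block of endomorphisms of the graded vector bundle $E \oplus E'$. The operator $\hR_\alpha$ (Lemma \ref{lemma:hR}) and hence the definition of $\hPsi$ (Definition \ref{def:differentiation_end_valued_things}) make sense verbatim for sections of $\Hom(s^* E^\bullet, t^*(E'^{\bullet + l}))$, since the defining formula only involves evaluating on sections of the source bundle. Thus $\hPsi \colon \hat{C}_G(\Hom(E,E')) \to \Omega(A,\Hom(E,E'))$ is well-defined by the same formula.

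Next, from $F_1 \in \overline{C}^1_G(\End^0(E))$ and $F'_1 \in \overline{C}^1_G(\End^0(E'))$ I form their block-diagonal combination
\[
\widetilde{F}_1 := F_1 \oplus F'_1 \in \overline{C}^1_G(\End^0(E \oplus E')).
\]
Its associated $A$-connection $\overline{\Psi}(\widetilde{F}_1)$ preserves the decomposition $E \oplus E'$ and hence agrees with $\nabla$ on the summands. Since $\hat{R}_\alpha$ is defined entry-wise with respect to the block structure, the connection induced on $\Hom(E,E')$ by $\widetilde{F}_1$ coincides with the one induced by $(F_1, F'_1)$ in the usual tensor fashion.

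Now $F \in \hat{C}^k_G(\End^m(E))$, $F' \in \hat{C}^{k'}_G(\End^{m'}(E'))$ and $T \in \hat{C}^a_G(\Hom^b(E,E'))$ all sit naturally inside $\hat{C}_G(\End(E \oplus E'))$ as block entries, and the compositions $T \circ F$ and $F' \circ T$ inside $\Hom(E,E')$ coincide with the corresponding compositions inside $\End(E \oplus E')$ (all other blocks being zero). Applying part (a) of Lemma \ref{corollary:hPhi} to these embedded elements with the structure cochain $\widetilde{F}_1$ yields equations (a) and (b) of the corollary directly, with the specified signs, once one observes that the mixed terms $F \circ T$ and $T \circ F'$ vanish in $\End(E \oplus E')$ for degree reasons. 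Similarly, applying part (b) of Lemma \ref{corollary:hPhi} to $T$ (viewed in $\End(E \oplus E')$) gives exactly statement (c), because the outer multiplications by $\widetilde{F}_1$ restrict to left multiplication by $F'_1$ and right multiplication by $F_1$ on the off-diagonal block.

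The only step that requires any care is checking that the normalized combination in part (c), namely $-F'_1 \circ T + \sum_{j=1}^k (-1)^{j+1} d_j^* T + (-1)^k T \circ F_1$, genuinely lies in the normalized subspace $\hat{C}^{k+1}_G(\Hom^b(E,E'))$; this is the only substantive verification and follows by the same direct inspection as in the $\End$-case, since inserting a unit in any slot causes the terms $d_j^*T$ to collapse and cancel against the contributions from the outer multiplications by $F_1$ and $F'_1$ (which satisfy $F_1(1)=\id$, $F'_1(1)=\id$). Once this is in place, everything else is a transparent application of Lemma \ref{corollary:hPhi}.
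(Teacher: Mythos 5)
Your proposal is correct and takes essentially the same approach as the paper: its proof is precisely the observation that $\hat{C}_G(\Hom(E,E'))$ embeds into $\hat{C}_G(\End(E\oplus E'))$, followed by an application of Lemma \ref{corollary:hPhi} to the graded vector bundle $E\oplus E'$. Your additional block-matrix details (the diagonal cochain $F_1\oplus F'_1$, the identification of $T\circ F$ and $F'\circ T$ with off-diagonal compositions, and the normalization check) simply make explicit what the paper's one-line proof leaves implicit.
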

\begin{proof}
This follows from applying Lemma \ref{corollary:hPhi} to the vector bundle $E\oplus E'$ and observing that  $\hat{C}_G(\Hom(E,E')) \subset  \hat{C}_G(\End(E' \oplus E'))$.
\end{proof}
The following lemma establishes the compatibility between $\hPsi$ and $\Psi$, the proof is a straightforward computation.

\begin{lemma}\label{lemma:R_and_hR}
For $F\in \hat{C}_{G}^{k}(\End^{m}(E))$ with $(k,m)\neq(1,0)$ and $\eta \in \hat{C}^{p}(G,E^{q})$, the following identities hold:
\begin{itemize}
\item[a)] $R_{\alpha}(\tilde{F}(\eta))=(-1)^{k}\tilde{F}(R_{\alpha}\eta)$ if $p>0$,
\item[b)] $R_{\alpha}(\tilde{F}(\eta))=(-1)^{q}\widetilde{(\hR_{\alpha}(F))}\left(\eta\right)$ if $p=0$,
\item[c)] $\Psi\left(\tilde{F}(\eta)\right) = \hPsi(F) \wedge \Psi(\eta).$
\end{itemize}
\end{lemma}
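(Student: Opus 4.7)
The plan is to verify (a) and (b) by direct computation with the explicit formulas for $\tilde F$, and then to derive (c) from them by a shuffle argument.

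For (a) and (b): since $(k, m) \neq (1, 0)$, I may use the module formula
\[
\tilde F(\eta)(g_1, \dots, g_{k+p}) = (-1)^{k(p+q)} F(g_1,\dots,g_k)\bigl(\eta(g_{k+1},\dots,g_{k+p})\bigr).
\]
For (a), with $p > 0$, the operator $R_\alpha$ attaches and differentiates a new arrow at the end, which sits inside $\eta$, so $R_\alpha$ slides past $F$ and acts directly on $\eta$; comparing the prefactors $(-1)^{k(p+q)}$ and $(-1)^{k(p-1+q)}$ gives the sign $(-1)^k$. For (b), with $p = 0$, the last argument $g_k$ of $\tilde F(\eta)(g_1,\dots,g_k) = (-1)^{kq} F(g_1,\dots,g_k)(\eta(s(g_k)))$ appears both in $F$ and, via $s(g_k)$, as the basepoint of $\eta$. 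Substituting $g_k = \Phi^\alpha_\epsilon(x)^{-1}$, differentiating at $\epsilon = 0$, and using $s(g^{-1}) = t(g)$ identifies the result with the defining formula of $\hat R_\alpha F$ applied to $\eta(x)$; comparing prefactors $(-1)^{kq}$ and $(-1)^{(k-1)q}$ gives the sign $(-1)^q$.

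For (c), I would expand $\Psi(\tilde F(\eta))(\alpha_1,\dots,\alpha_{k+p})$ using its definition as an alternating sum over $S_{k+p}$ of iterated $R$'s. For each $\tau$, iterated use of (a) slides the first $p$ operators into $\eta$, reducing its simplicial degree to zero and accumulating a sign $(-1)^{kp}$; the next operator is converted to an $\hat R$ on $F$ by (b), and further iterations absorb the remaining operators as $\hat R$'s on $F$ by the same calculation, each step contributing a sign $(-1)^q$. After a block-swap of indices of sign $(-1)^{kp}$, the resulting double sum over orderings of two blocks is recognized as the shuffle expansion of $\hat\Psi(F) \wedge \Psi(\eta)$. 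The prefactor calculation
\[
(-1)^{(k+p)(q+m)} \cdot (-1)^{kp + kq + kp} = (-1)^{km + pq + pm}
\]
matches the combined signs $(-1)^{km}$, $(-1)^{pq}$ coming from $\hat\Psi(F)$ and $\Psi(\eta)$ together with the Koszul sign $(-1)^{mp}$ for wedging an $\End^m(E)$-valued $k$-form with an $E^q$-valued $p$-form.

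The main obstacle is the sign bookkeeping just described. A secondary subtlety is that the iterated application of (b) may force the intermediate bidegree $(k-j, m)$ of $\hat R^j F$ to equal $(1, 0)$ for some $j$, in which case $\widetilde{\hat R^j F}$ is defined by the derivation formula rather than the module formula. However, when restricted to $\hat C^0$-sections, the additional terms in the derivation formula do not depend on the new arrow appended by the next $R_\alpha$, so they are annihilated by the derivative, and the iterated form of (b) still holds.
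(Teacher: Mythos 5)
Your proposal is correct, and it is exactly the ``straightforward computation'' that the paper invokes without writing down: the paper gives no proof of this lemma at all, so your direct verification of (a) and (b) from the module formula and the shuffle/sign bookkeeping for (c) is precisely the intended argument. Your closing observation about the intermediate bidegree $(1,0)$ case --- where $\widetilde{\hR^{j}F}$ would be given by the derivation formula, whose extra term is constant along the appended arrow $\Phi^{\alpha}_{\epsilon}(x)^{-1}$ (since $t\bigl(\Phi^{\alpha}_{\epsilon}(x)^{-1}\bigr)=x$) and is therefore killed by the derivative --- is a genuine subtlety the paper glosses over, and you resolve it correctly.
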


We can now prove the main result of this section.

\begin{theorem}\label{theorem:differentiation_reps_up_to_homotopy}
Let $G$ be a Lie groupoid over $M$ with Lie algebroid $A$. 
There is a functor
\[\Psi: \URRep(G) \rightarrow  \RRep(A),\]
defined as follows: If $E\in \URRep(G)$ has structure operator
\[D=\tilde{F_0}+\tilde{ F_1}+\tilde{ F_2}+ \dots ,\]
then $\Psi(E)$ is the representation up to homotopy of $A$ on the graded vector bundle $E$ with structure operator
\begin{equation}\label{equationPsi(E)}
\Psi(D)= \hPsi(F_0)+\overline{\Psi}(F_1)+\hPsi(F_2)+\dots.
\end{equation}
Given a morphism 
\[\phi=\tilde{\phi_0}+\tilde{\phi_1}+\tilde{\phi_2}+\dots\] 
between representations up to homotopy of $G$, the operator
\[\Psi(\phi)=\hPsi(\phi_0)+\hPsi(\phi_1)+\hPsi(\phi_2)+\dots\] 
is a morphism between the corresponding representations up to homotopy of $A$. Moreover, for any $E \in \URRep(G)$, the linear map
\begin{equation}\label{equationPsi(phi)}
\Psi:\hat{C}(G,E) \rightarrow \Omega(A, \Psi(E)), 
\end{equation}
introduced in Definition \ref{def:differentiation_cochains}, is a morphism of chain complexes and is compatible with morphisms in the sense that
\begin{equation}\label{compatibility}
 \Psi(\phi(\eta))=\hPsi(\phi) (\Psi(\eta)).
\end{equation}
\end{theorem}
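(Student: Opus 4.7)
The plan is to translate the four claims of the theorem into statements about the coefficients $F_k$, $\phi_k$ via the algebraic descriptions in Propositions \ref{decomposition with respect to F} and \ref{Rep-long}, and then to apply the compatibility lemmas of \S\ref{ss:differentiating_reps} term by term. The whole argument is of the form: write the defining identity on the groupoid side, apply $\hPsi$ (or $\Psi$), and use Lemma \ref{corollary:hPhi}, Corollary \ref{Hom} and Lemma \ref{lemma:R_and_hR} to recognise the result as the defining identity on the algebroid side.

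First, I would verify that \eqref{equationPsi(E)} really defines a representation up to homotopy of $A$. By Proposition \ref{Rep-long} this amounts to three things. The component $\partial := \hPsi(F_0)$ is a bundle endomorphism of degree $1$ of $E$; its square vanishes because applying $\hPsi$ to the $k=0$ case of \eqref{structure equations} and using part $a)$ of Lemma \ref{corollary:hPhi} gives $\hPsi(F_0)\circ\hPsi(F_0)=0$. The component $\nabla:=\overline{\Psi}(F_1)$ is by construction an $A$-connection that respects the degree, and compatibility with $\partial$ comes from differentiating the $k=1$ case of \eqref{structure equations}. Finally, for $k\ge 2$, the core calculation is to apply $\hPsi$ to \eqref{structure equations}: the term $\sum_{j=1}^{k-1}(-1)^j F_{k-1}(g_1,\dots,g_jg_{j+1},\dots,g_k)$ is precisely the ``middle'' part of the expression appearing in part $b)$ of Lemma \ref{corollary:hPhi}, and the terms $F_0\circ F_k$, $(-1)^k F_k\circ F_1$ on the right supply the remaining boundary pieces. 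Thus the whole left-hand side transforms into $\pm d_\nabla \hPsi(F_{k-1})+\partial\hPsi(F_k)\pm\hPsi(F_k)\partial$, while the remaining terms on the right, via part $a)$ of Lemma \ref{corollary:hPhi}, become the sum $\sum_{i+j=k,\,i,j\ge 2}\hPsi(F_i)\wedge\hPsi(F_j)$. Up to the sign conventions of equation \eqref{structure}, this is exactly the structure equation of Proposition \ref{Rep-long} for $\omega_k:=\hPsi(F_k)$.

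Second, to see that $\Psi(\phi)$ is a morphism in $\RRep(A)$, I would apply the same mechanism to the morphism equation \eqref{equations for a map}, using Corollary \ref{Hom} in place of Lemma \ref{corollary:hPhi}. The ``face-composition'' term $\sum_j (-1)^j\phi_{k-1}(g_1,\dots,g_jg_{j+1},\dots,g_k)$ differentiates to a sum that rearranges as $\partial\hPsi(\phi_k)\pm\hPsi(\phi_k)\partial+d_\nabla\hPsi(\phi_{k-1})$, while the two sums $\sum_{i+j=k}(-1)^j\phi_j\circ F_i$ and $\sum_{i+j=k}F'_j\circ\phi_i$ turn into the corresponding wedge sums of $\hPsi(\phi_i)$'s and $\hPsi(F_j)$'s (respectively $\hPsi(F'_j)$'s). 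This is exactly the condition stated below the definition of morphism in $\RRep(A)$.

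Third, the chain map statement for \eqref{equationPsi(phi)} and the compatibility \eqref{compatibility} both reduce, after expanding $D=\sum_k\tilde F_k$ and $\phi=\sum_k\tilde\phi_k$, to showing that for every $k$
\[
\Psi\bigl(\tilde F_k(\eta)\bigr)=\hPsi(F_k)\wedge \Psi(\eta),\qquad \Psi\bigl(\tilde\phi_k(\eta)\bigr)=\hPsi(\phi_k)\wedge \Psi(\eta).
\]
For $(k,m)\neq(1,0)$ these are parts $a)$--$c)$ of Lemma \ref{lemma:R_and_hR}. For the exceptional component $F_1\in\overline{C}^1_G(\End^0(E))$, the operator $\tilde F_1$ is a derivation rather than a cochain multiplication; here one proves by direct computation that $\Psi(\tilde F_1(\eta))=d_\nabla(\Psi(\eta))$, using the definition of $\overline{\Psi}(F_1)$ together with the module statement in Lemma \ref{lemma:differentiation_cochains} and the derivation property of $\tilde F_1$ with respect to $\hat C(G)$. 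Combining these term-by-term identities, summing over $k$, and using the already-proved fact that $\Psi(D)$ is the structure operator of $\Psi(E)$, yields both $\Psi\circ D=\Psi(D)\circ\Psi$ and $\Psi\circ\phi=\Psi(\phi)\circ\Psi$.

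The conceptual content is essentially all contained in the lemmas of \S\ref{ss:differentiating_reps}; the main obstacle is the bookkeeping of signs, in particular matching the prefactor $(-1)^{km}$ in Definition \ref{def:differentiation_end_valued_things} with the Koszul signs appearing on the algebroid side, and carefully isolating the exceptional case $(k,m)=(1,0)$ where $\tilde F_1$ behaves as a derivation and must be treated by hand.
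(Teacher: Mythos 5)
Your proposal is correct and follows essentially the same route as the paper: translate everything into the structure equations for the coefficients $F_k$, $\phi_k$, apply $\hPsi$, and invoke Lemma \ref{corollary:hPhi}, Corollary \ref{Hom} and Lemma \ref{lemma:R_and_hR} term by term. The only cosmetic difference is the exceptional component $\tilde F_1$ in the chain-map statement: you propose a direct computation that $\Psi(\tilde F_1(\eta))=d_\nabla(\Psi(\eta))$, whereas the paper deduces it formally from Corollary \ref{Hom} $c)$ applied to morphisms from the trivial bundle $\mathbb{R}$ (with the constant connection $1\in\overline{C}^1_G(\End^0(\mathbb{R}))$) into $E$ with connection $F_1$ --- both work.
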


\begin{proof}
We  will denote the $A$-connection $\hPsi(F_1)$ by $\nabla$.
First, we need to prove that the operator $\Psi(D)$ defined in equation $(\ref{equationPsi(E)})$ equips the graded vector bundle $E$ with
the structure of a representation up 
to homotopy of $A$. We know that the operators $F_i$ satisfy the equations
\begin{eqnarray*}
 \sum_{j=0}^{k}(-1)^{j}\left(F_{j}\circ F_{k-j}\right) +  \sum_{j=1}^{k-1}(-1)^{j+1}d_{j}^{*}(F_{k-1})=0 ,
\end{eqnarray*}
which are the structure equations for representations up to homotopy of Lie groupoids as explained in Proposition \ref{decomposition with respect to F}. Applying $\hPsi$ to these equalities and using Lemma \ref{corollary:hPhi}, one obtains the equations
\[ d_{\nabla}(\hPsi(F_{k-1}))+ \sum_{i\notin \{1,k-1\}}\hPsi(F_{i}) \wedge \hPsi(F_{k-i})=0.\]
These are the structure equations for a representation up to homotopy of $A$ given in Proposition \ref{Rep-long}. Next, we need to prove that the map $\Psi(\phi)$ defined
in equation (\ref{equationPsi(phi)}) is a morphism of representations up to homotopy of $A$. Since $\phi$ is a morphism from $E$ to $E'$, the following identities hold:
\begin{eqnarray}
\sum_{i+j=k}(-1)^{j+1}\phi_j \circ 
F_{i}+\sum_{i+j=k}F_{j}^{'}
\circ \phi_{i}+\sum_{j=1}^{k-1}(-1)^{j}d^*_j(\phi_{k-1})=0.
\end{eqnarray}
Applying $\hPsi$ to this equality and using Corollary \ref{Hom}, we arrive at
\[d_{\nabla}(\hPsi(\phi_{k-1})+\sum_{j\neq 1}[\hPsi(F_j),\hPsi(\phi_{k-j})]=0,\]
which are the structure equations for morphisms at the infinitesimal level. A simple computation shows that this construction respects the composition. We conclude that
the functor $\Psi$ is well defined.

Let us now prove that the map $\Psi:\hat{C}(G,E) \rightarrow \Omega(A,\Psi(E))$ is a morphism of chain complexes. 
We observe that Lemma \ref{lemma:R_and_hR} implies that the diagram

\begin{eqnarray*}
\xymatrix{
\hat{C}(G,E) \ar[r]^{\tilde{F}_k}\ar[d]_{\Psi} & \hat{C}(G,E) \ar[d]^{\Psi}\\
\Omega(A,E) \ar[r]_{\hPsi(F_k)} & \Omega(A,E)
}
\end{eqnarray*}
commutes for $k\ne 1$. We still need to prove that the diagram
\begin{eqnarray*}
\xymatrix{
\hat{C}(G,E) \ar[r]^{\tilde{F_1}}\ar[d]_{\Psi} & \hat{C}(G,E) \ar[d]^{\Psi}\\
\Omega(A,E) \ar[r]_{d_{\nabla}} & \Omega(A,E)
}
\end{eqnarray*}
commutes. This is a formal consequence of Corollary \ref{Hom} $c)$, applied to the case of maps from the trivial vector bundle $\mathbb{R}$ with
connection given by the constant section $1 \in \overline{C}^1_G(\End^0(\mathbb{R}))$ to the vector bundle $E$ with connection given by $F_1 \in  \overline{C}^1_G(\End^0(E))$. 

The last claim follows immediately from part $c)$ of Lemma \ref{lemma:R_and_hR}.
\end{proof}

We will show now that the functor $\Psi$ constructed above behaves in a natural way with respect to the operation of
pulling back representations up to homotopy. Given a morphism of Lie groupoids
\begin{eqnarray*}
\xymatrix{
G \ar[r]^{\gamma}\ar@<-1mm>[d]_{s}\ar@<1mm>[d]^{t} & Q \ar@<-1mm>[d]_{s} \ar@<1mm>[d]^{t}\\
M \ar[r]^{\gamma}& N,
}
\end{eqnarray*}
one can differentiate it to obtain a morphism between the corresponding Lie algebroids
\begin{eqnarray*}
\xymatrix{
A \ar[r]^{\gamma}\ar[d] & B \ar[d]\\
M \ar[r]^{\gamma}& N.
}
\end{eqnarray*}

Assume that $(C(Q,E),D)$ is a representation up to homotopy of $Q$ on a graded vector bundle $E\to N$.
Let $(F_k)_{k\geq 0}$ be the structure maps corresponding to $D$. We define $\gamma^{*}(F_k) \in C_G(\End(\gamma^*(E)))$ by the formula
\begin{eqnarray*}
(\gamma^{*}F_k)(g_1,\dots,g_k):=F_k(\gamma(g_1),\dots,\gamma(g_k)).
\end{eqnarray*}
Clearly this sequence equips the vector bundle $\gamma^*(E)$ with the structure of a representation up homotopy of $G$.
Observe that the pull back actually extends to a functor $\URRep(Q) \to \URRep(G)$.

Furthermore, there is a chain map given by the formula
\begin{eqnarray*}
&\gamma^{*}: (C(Q,E),D) \to (C(G,\gamma^{*}E),\gamma^{*}D),\\
&(\gamma^{*}\eta)(g_1,\dots,g_k):=\eta(\gamma(g_1),\dots,\gamma(g_k)).
\end{eqnarray*}

One can also pull back representations up to homotopy along morphisms of Lie algebroids in a functorial way,
and in this situation the pull back of cochains also yields
a map of complexes
\begin{eqnarray*}
&\hat{\gamma}^{*}: (\Omega(B,E),D) \to (\Omega(A,\gamma^{*}E),\gamma^{*}D).
\end{eqnarray*}

All this operations are compatible in the following sense:
\begin{remark}\label{lemma:pullback_naturality}
Let $\gamma: G \to Q$ be a morphism of Lie groupoids and let $\gamma: A \to B$ be the induced morphism of Lie algebroids. 
Given a representation up to homotopy $(C(Q,E),D)$, all the faces of the following cube commute:
\begin{eqnarray*}
\xymatrix{
 & \Omega(B,E) \ar '[d]^{\hPsi(D)} [dd] \ar[rr]^{\gamma^{*}} & & \Omega(A,\gamma^{*}E)\ar[dd]^{\hPsi(\gamma^{*}D)} \\
C(Q,E) \ar[dd]_D \ar[rr]_<<<<<<<<<<{\gamma^{*}} \ar[ru]^{\Psi} & & C(G,\gamma^{*}E) \ar[dd]^<<<<<<<<{\gamma^{*}D} \ar[ru]_{\Psi} & \\
 & \Omega(B,E)[1] \ar '[r]_-{\gamma^{*}} [rr] & & \Omega(A,\gamma^{*}E)[1] \\
C(Q,E)[1]\ar[rr]_{\gamma^{*}} \ar[ru]^{\Psi} & & C(G,\gamma^{*}E)[1]\ar[ru]^{\Psi} & \\
}
\end{eqnarray*}
\end{remark}

\section{The van Est theorem and deformations}\label{section:deformations}

\subsection{An isomorphism theorem}\label{subsection:isomorphism}

We will now prove a van Est theorem for representations up to homotopy, which provides conditions
under which the differentiation map $\Psi:\hat{C}(G,E) \rightarrow \Omega(A,\Psi(E))$ induces an isomorphism 
in cohomology (in certain degrees).

Given a surjective submersion $\pi:M \rightarrow N$, there is a groupoid $M\times_N M$ over $M$ with target and source maps
given by the projections on the first and second component, respectively.

\begin{definition}
An elementary Lie groupoid $G$ is a Lie groupoid that is isomorphic to $M\times_N M$ for some surjective submersion $\pi: M \rightarrow N$ which
admits a section $\iota:N \rightarrow M$.
\end{definition}

\begin{lemma}\label{lemma:elementary}
Let $G$ be an elementary groupoid. Then, any unital representation up to homotopy $E$ of $G$ is quasi-isomorphic to a unital representation up to homotopy $E'$ with the property
that the structure operator $D$ of $E'$ is of the form:
\begin{equation*}
D=\tilde{F}'_0+\tilde{F}'_1,
\end{equation*}
with respect to the decomposition given in Proposition \ref{decomposition with respect to F}.
\end{lemma}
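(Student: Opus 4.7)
The plan is to exploit the contractibility of the nerve of an elementary groupoid and iteratively gauge away the higher structure maps $F_k$ ($k\geq 2$) of the given unital representation up to homotopy.

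\textbf{Step 1 (Contracting homotopy).} Using the section $\iota:N\to M$, I define an extra degeneracy
\[ s_{-1}:G_k\to G_{k+1},\quad (g_1,\dots,g_k)\mapsto\bigl((\iota\pi t(g_1),t(g_1)),\,g_1,\dots,g_k\bigr), \]
on the nerve $G_\bullet$ of $G=M\times_N M$. Pullback along $s_{-1}$ produces a degree $-1$ operator $h$ on the normalized cochain complexes $\hat{C}^\bullet_G(\End^m(E))$ satisfying $\delta h+h\delta=\id$ in positive simplicial degrees, where
\[ \delta F:=\sum_{j=1}^{k-1}(-1)^{j+1}d_j^*F \]
is the Hochschild-type differential visible in the structure equations of Proposition \ref{decomposition with respect to F}. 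Hence the complex $(\hat{C}^\bullet_G(\End^\bullet(E)),\delta)$ is acyclic in positive simplicial degree.

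\textbf{Step 2 (Inductive gauging).} I construct recursively a sequence of unital representations up to homotopy $E_n$ together with quasi-isomorphisms $\phi^{(n)}:E_{n-1}\to E_n$ such that $F_k^{(n)}=0$ for $2\leq k\leq n$, starting from $E_1:=E$. Given $E_{n-1}$, the structure equation at level $n+1$, after using $F_k^{(n-1)}=0$ for $2\leq k\leq n-1$, expresses that $F_n^{(n-1)}$ is a cocycle for the perturbed differential $\delta+[F_0^{(n-1)},\cdot]_\pm+[F_1^{(n-1)},\cdot]_\pm$ modulo the as-yet-unspecified higher $F_k^{(n-1)}$. Invoking $h$ via a perturbation-lemma-style argument, I extract $\phi_{n-1}\in\hat{C}^{n-1}_G(\End^{2-n}(E_{n-1}))$ whose perturbed coboundary equals $F_n^{(n-1)}$. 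The gauge morphism $\phi^{(n)}=\id+\phi_{n-1}+(\text{higher corrections})$ and the new structure maps $F_k^{(n)}$ for $k>n$ are then determined inductively by solving the morphism equations of Proposition \ref{decomposition with respect to F} at each higher simplicial level, each time a cocycle equation amenable to $h$. The level-$n$ morphism equation is set up to force $F_n^{(n)}=0$, while the morphism equations at levels $<n$ are trivially satisfied because $\phi_0=\id$ and $\phi_1=\cdots=\phi_{n-2}=0$.

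\textbf{Step 3 (Stabilization).} For $n\geq 3$ the leading non-trivial component $\phi_{n-1}$ sits in simplicial degree $n-1\geq 2$, so the morphism equations at degrees $0$ and $1$ force $F_0^{(n)}=F_0^{(n-1)}$ and $F_1^{(n)}=F_1^{(n-1)}$. Hence $(F_0^{(n)},F_1^{(n)})$ stabilizes after the first step to $(F'_0,F'_1):=(F_0^{(2)},F_1^{(2)})$, while the composite $\phi^{(n)}\circ\cdots\circ\phi^{(2)}$ stabilizes in every fixed simplicial degree. The limit is a quasi-isomorphism from $E$ to a representation up to homotopy $E'$ whose structure operator has the required form $D=\tilde{F}'_0+\tilde{F}'_1$.

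\textbf{Main obstacle.} The delicate point is the book-keeping in Step 2: $F_n^{(n-1)}$ is only a cocycle modulo terms involving higher, not-yet-specified, structure maps, and one must simultaneously produce a consistent $\phi^{(n)}$ together with coherent new $F_k^{(n)}$ at every level $k>n$. This is essentially a Maurer--Cartan / perturbation-lemma argument in the differential bigraded algebra $(\hat{C}^\bullet_G(\End^\bullet(E)),\delta)$, and requires careful tracking of signs and bidegrees. The elementary-groupoid hypothesis enters the whole argument only through Step 1, where it supplies the ambient contracting homotopy $h$ that drives the entire induction.
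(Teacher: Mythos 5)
Your Step 1 contains a genuine gap, and it is the step on which everything else depends. First, the operator $h=s_{-1}^*$ is not well defined on $\hat{C}^{\bullet}_G(\End^m(E))$: an element $F\in\hat{C}^{k+1}_G(\End^m(E))$ is a section of $\Hom(s^*E,t^*E)$ over $G_{k+1}$, so $F(\nu(g_1),g_1,\dots,g_k)$ maps $E_{s(g_k)}$ to $E_{t(\nu(g_1))}=E_{\iota\pi t(g_1)}$, \emph{not} to $E_{t(g_1)}$. Hence $s_{-1}^*F$ lies in $\Gamma\bigl(G_k,\Hom(s^*E,(\iota\circ\pi\circ t)^*E)\bigr)$, which is not $\hat{C}^{k}_G(\End^m(E))$; your $h$ does not map the complex to itself. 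Second, even setting this aside, the identity $\delta h+h\delta=\id$ fails for the inner-face differential you wrote down: an extra degeneracy contracts the \emph{full} simplicial differential, where the crucial cancellation comes from $d_0s_{-1}=\id$; keeping only the inner faces one finds $\delta h+h\delta=T$ with $(TF)(g_1,\dots,g_k)=F(\nu(g_1)g_1,g_2,\dots,g_k)$, which is a nontrivial pullback operator, not the identity. For $\End(E)$-valued cochains the role of the missing outer faces is played by composition with $F_1$, so in the perturbed differential the would-be identity term becomes $F_1(\nu(g_1))\circ F(g_1,\dots,g_k)$ --- and $F_1(\nu(g_1))$ is neither the identity nor invertible for a general representation up to homotopy. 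So $h$ contracts neither $\delta$ nor the perturbed differential, and the Maurer--Cartan/perturbation induction of Steps 2--3 has no contraction to run on.

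It is instructive that repairing this gap essentially forces the paper's proof on you. The target mismatch says that $s_{-1}^*$ naturally carries $\End(E)$-valued cochains to $\Hom(E,\pi^*\iota^*E)$-valued cochains; in other words, the strictified representation should not live on $E$ with its given fibers, but on the pullback bundle $\pi^*\iota^*E$, where $\pi:G\to\mathcal{N}$ and $\iota:\mathcal{N}\to G$ are the morphisms to and from the unit groupoid $\mathcal{N}$ of $N$. The paper does exactly this: $\pi^*\iota^*E$ is automatically of the desired form, $D'=\tilde{F}'_0+\tilde{F}'_1$ with $\tilde{F}'_1(g)=\id$ and $\tilde{F}'_0(x)=F_0(\iota\pi(x))$, because it is pulled back from $\mathcal{N}$; and the single explicit map $\phi_k(g_1,\dots,g_k)=F_{k+1}(\nu(g_1),g_1,\dots,g_k)$ --- precisely ``your $h$'' applied to the structure maps, now landing in the correct target --- is checked directly, from the structure equations, to be a morphism $E\to\pi^*\iota^*E$ of representations up to homotopy whose $\phi_0$ is a fiberwise quasi-isomorphism. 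No induction, no limit and no perturbation lemma are needed: the one-shot morphism replaces your entire recursive gauging scheme.
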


\begin{proof}
We know that $G$ is the groupoid associated to a surjective submersion $\pi: M \rightarrow N$ that admits a section $\iota: N \rightarrow M$. 
Let us denote by $\mathcal{N}$ the unit groupoid of $N$. Then, there are canonical morphisms of Lie groupoids $\pi:G \rightarrow \mathcal{N}$ and
$\iota: \mathcal{N} \rightarrow G$.
We claim that $E$ is quasi-isomorphic to $\pi^* (\iota^*(E))$.
Given an arrow $g \in G$, there is a unique arrow $\nu(g)\in G$ with the property that $s(\nu(g))=t(g)$ and $t(\nu(g))=t(\iota \pi(g))$. 
Let 
\[D=\tilde{F}_0+ \tilde{F}_1+ \tilde{F}_2+\cdots,\]
be the structure operator for $E$. We define a morphism of representations up to homotopy:
\[\phi: E \rightarrow \pi^* (\iota^*(E))\]
by 
\[\phi=\phi_0+\phi_1+\phi_2+\cdots,\]
where

\[ \phi_k(g_1,\dots.g_k)=F_{k+1}(\nu(g_1),g_1,\dots,g_k).\]
We denote the structure operator of $\pi^* (\iota^*(E))$ by $D'$ and observe that
it has the form:
\[D'=\tilde{F}'_0+ \tilde{F}'_1.\]
This is the case because $\pi^* (\iota^*(E))$ is the pull-back of a unital representation up to homotopy of the unit groupoid $\mathcal{N}$, and those are always
of this form. Moreover one immediately checks that:
\begin{eqnarray*}
\tilde{F}'_0(x)&=&F_0(\iota (\pi(x))),\\
\tilde{F}'_1(g) &=& \id.
\end{eqnarray*}
In order to prove that $\phi$ is a morphism of representations we need to establish the following equations:
\begin{eqnarray*}
\sum_{i+j=k}(-1)^{j}\phi_j(g_1,\dots,g_j) \circ
F_{i}(g_{j+1},\dots,g_k)&-& \sum_{i+j=k}F_{j}^{'}(g_1,\dots , g_j)
\circ \phi_{i}(g_{j+1},\dots,g_k)\\
&+&\sum_{j=1}^{k-1}(-1)^{j+1}\phi_{k-1}(g_1,\dots,g_jg_{j+1},\dots
,g_k)=0.
\end{eqnarray*}
For this we compute directly:
\begin{eqnarray*}
&& \hspace{-0.5cm} \sum_{i+j=k}(-1)^{j}\phi_j(g_1,\dots,g_j) \circ
F_{i}(g_{j+1},\dots,g_k)- \sum_{i+j=k}F_{j}^{'}(g_1,\dots , g_j)
\circ \phi_{i}(g_{j+1},\dots,g_k)\\
&&+\sum_{j=1}^{k-1}(-1)^{j+1}\phi_{k-1}(g_1,\dots,g_jg_{j+1},\dots
,g_k)\\
&=&\sum_{i+j=k}(-1)^{j}F_{j+1}(\nu(g_1),g_1,\dots,g_j) \circ
F_{i}(g_{j+1},\dots,g_k)- F'_0 \circ F_{k+1}(\nu(g_1),g_1,\dots, g_k)\\
&&-F'_1(g_1) \circ F_{k}(\nu(g_2),g_2,\dots, g_k)
+\sum_{j=1}^{k-1}(-1)^{j+1}F_{k}(\nu(g_1),g_1,\dots,g_jg_{j+1},\dots
,g_k)\\
&=&\sum_{i+j=k+1}(-1)^{j+1}F_{j}(\nu(g_1),g_1,\dots,g_j) \circ
F_{i}(g_{j+1},\dots,g_k)
- F_{k}(\nu(g_2),g_2,\dots, g_k)\\
&&+\sum_{j=1}^{k-1}(-1)^{j+1}F_{k}(\nu(g_1),g_1,\dots,g_jg_{j+1},\dots
,g_k)\\
&=&\sum_{i+j=k+1}(-1)^{j+1}F_{j}(\nu(g_1),g_1,\dots,g_j) \circ
F_{i}(g_{j+1},\dots,g_k)
- F_{k}(\nu(g_1)g_1,g_2,\dots, g_k)\\
&&+\sum_{j=1}^{k-1}(-1)^{j+1}F_{k}(\nu(g_1),g_1,\dots,g_jg_{j+1},\dots
,g_k)\\
&=& 0.
\end{eqnarray*}
The last equation holds because of the structure equations for the operator $D$.
We conclude that $\phi$ is a morphism of representations up to homotopy. One also
checks immediately that $\phi_0$ induces isomorphism in cohomology at every point. This completes the proof.
\end{proof}

\begin{proposition}\label{easy}
Let $G$ be an elementary Lie groupoid with $l$-connected source fibers and suppose that $E=\bigoplus_{k=a}^bE^k$ is a unital representation up to homotopy of $G$.
Then, the map induced in cohomology 
\begin{equation*}
\Psi:H^n(G,E) \rightarrow H^n(A,\Psi(E)),
\end{equation*}
is an isomorphism for $a \leq n\leq a+l$.
\end{proposition}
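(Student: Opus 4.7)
The plan is to reduce via Lemma \ref{lemma:elementary} to the case where the structure operator of $E$ has only the components $\tilde{F}_0$ and $\tilde{F}_1$, and then induct on the amplitude $b-a$ of the grading of $E$, using the classical van Est theorem for honest representations as the base case.

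For the reduction, Lemma \ref{lemma:elementary} provides a quasi-isomorphism from $E$ to a unital representation up to homotopy $E'$ whose structure operator is $\tilde{F}'_0+\tilde{F}'_1$. Quasi-isomorphisms induce isomorphisms on $H(G,-)$, and the functor $\Psi$ preserves quasi-isomorphisms: the leading component of $\Psi(\phi)$ is the bundle map $\phi_0$ acting pointwise as an element of $\Omega^0(A,\Hom(E,E'))$, which is still a pointwise quasi-isomorphism by assumption. We may therefore replace $E$ by $E'$ and assume from the outset that $D=\tilde{F}_0+\tilde{F}_1$. In this reduced setting $F_1$ is an honest unital $1$-cocycle, so $E$ becomes an ordinary (graded) representation of $G$ equipped with a $G$-equivariant degree $+1$ differential $\partial:=F_0$; the same structure, including boundedness of the $E$-grading, appears under $\Psi$ on the Lie algebroid side.

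I then induct on the amplitude $b-a$, with the strengthened claim that $\Psi: H^n(G,E)\to H^n(A,\Psi(E))$ is an isomorphism for $a\leq n\leq a+l$ and a monomorphism for $n=a+l+1$. The base case $b=a$ is the classical van Est theorem applied to the single honest representation $E^a$ on an elementary groupoid with $l$-connected source fibers, shifted in total degree by $a$. For the inductive step, consider the short exact sequence of graded $G$-representations
\begin{equation*}
0\longrightarrow E^b\longrightarrow E\longrightarrow E^{\leq b-1}\longrightarrow 0,
\end{equation*}
where $E^b$ sits in degree $b$ and $E^{\leq b-1}$ is the quotient complex concentrated in degrees $[a,b-1]$. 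This induces short exact sequences of cochain complexes on both $\hat{C}(G,-)$ and $\Omega(A,\Psi(-))$, intertwined by $\Psi$. The five lemma applied to the resulting commutative ladder of long exact sequences yields the isomorphism for $n\leq a+l$, while the four lemma yields the monomorphism at $n=a+l+1$. The inductive hypothesis controls the cohomologies of $E^{\leq b-1}$ (whose amplitude is $b-1-a$), and the base case, applied to $E^b$ shifted to top degree $b$, controls the cohomologies of $E^b$.

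The subtle point that forces the form of the strengthened claim is that the monomorphism statement at $n=a+l+1$ cannot be dropped: the five-lemma step for the isomorphism at the boundary $n=a+l$ needs at least monicity of $\Psi$ on $H^{a+l+1}(G,E^{\leq b-1})$, so without the strengthening the induction would fail to close at the top of the range. Apart from invoking the classical van Est theorem for honest representations in the base case, the whole argument is a direct diagram chase.
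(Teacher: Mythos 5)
Your proof is correct, and after the shared opening move (the reduction via Lemma \ref{lemma:elementary} to a structure operator of the form $\tilde{F}_0+\tilde{F}_1$, together with the observation that $\Psi$ preserves quasi-isomorphisms because the leading term of $\Psi(\phi)$ is $\phi_0$) it follows a genuinely different route from the paper's. The paper assembles $\hat{C}(G,E)$ and $\Omega(A,\Psi(E))$ into double complexes whose columns are the complexes of the ordinary representations $E^j$, applies the classical van Est theorem of \cite{Crainic} column by column, and concludes with a standard spectral sequence comparison of the total cohomologies. You instead induct on the amplitude $b-a$ using the truncation sequence $0\to E^b\to E\to E^{\leq b-1}\to 0$, the resulting ladder of long exact sequences, and the five lemma. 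Your route is more elementary (no spectral sequences, fully explicit degree bookkeeping) and yields a slightly stronger conclusion (injectivity in degree $a+l+1$); the paper's is shorter, and its filtration argument is the standard template for comparisons of this type. Both rest on exactly the same external input, Crainic's van Est theorem for honest representations. One side remark of yours is inaccurate, though it creates no gap: in the five lemma for $H^{a+l}(G,E)$ the fifth term is $H^{a+l+1}(G,E^{b})$, the cohomology of the \emph{sub}object, not of the quotient $E^{\leq b-1}$; since $E^b$ is concentrated in degree $b\geq a+1$, this group is the differentiable cohomology of the honest representation $E^b$ in degree $a+l+1-b\leq l$, which lies inside the isomorphism range of the base case. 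Consequently the strengthened induction hypothesis is not actually needed for the isomorphism statement; it is needed only to propagate the monomorphism statement itself through the four lemma, where monicity on $H^{a+l+1}(G,E^{\leq b-1})$ genuinely enters. Since your strengthened induction closes consistently, the proof stands as written.
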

\begin{proof}
We can obviously assume that $a=0$.
In view of Lemma \ref{lemma:elementary} and the fact that quasi-isomorphisms induce isomorphisms in cohomology, 
it suffices to consider the case where the structure operator of $E$ is of the form:
\begin{equation*}
D=\tilde{F}_0+\tilde{F}_1.
\end{equation*}
This means that the cohomology $H(G,E)$ is computed by the double complex:

\begin{eqnarray*}
\xymatrix{
  \vdots &\vdots &\vdots  & \\
\hat{C}^2(G,E^0) \ar[r]^-{{\tilde{F}_0}}\ar[u]^{\tilde{F}_1} &
\hat{C}^2(G,E^1)
\ar[r]^-{{\tilde{F}_0}}\ar[u]^{\tilde{F}_1} & \hat{C}^2(G,E^2)\ar[r]^-{{\tilde{F}_0}}\ar[u]^{\tilde{F}_1}& \dots\\
\hat{C}^1(G,E^0)\ar[r]^-{{\tilde{F}_0}}\ar[u]^{\tilde{F}_1} &
\hat{C}^1(G,E^1)
\ar[r]^-{{\tilde{F}_0}}\ar[u]^{\tilde{F}_1} & \hat{C}^1(G,E^2)\ar[r]^-{{\tilde{F}_0}}\ar[u]^{\tilde{F}_1}& \dots\\
\hat{C}^0(G,E^0) \ar[r]^-{{\tilde{F}_0}}\ar[u]^{\tilde{F}_1} &
\hat{C}^0(G,E^1)
\ar[r]^-{{\tilde{F}_0}}\ar[u]^{\tilde{F}_1} & \hat{C}^0(G,E^2)\ar[r]^-{{\tilde{F}_0}}\ar[u]^{\tilde{F}_1}& \dots }
\end{eqnarray*}

Here the vertical lines are given by the complexes associated to ordinary representations of $G$.
The differentiation operator $\Psi$ induces a map of double complexes to the  corresponding double complex
associated to the Lie algebroid:
\begin{eqnarray*}
\xymatrix{
 \vdots &\vdots &\vdots  & \\
\Omega^2(A,E^0) \ar[r]^-{{\tilde{F}_0}}\ar[u]^{\tilde{F}_1} &
\Omega^2(A,E^1)
\ar[r]^-{{\tilde{F}_0}}\ar[u]^{\tilde{F}_1} & \Omega^2(A,E^2)\ar[r]^-{{\tilde{F}_0}}\ar[u]^{\tilde{F}_1}& \dots\\
\Omega^1(A,E^0)\ar[r]^-{{\tilde{F}_0}}\ar[u]^{\tilde{F}_1} &
\Omega^1(A,E^1)
\ar[r]^-{{\tilde{F}_0}}\ar[u]^{\tilde{F}_1} & \Omega^1(A,E^2)\ar[r]^-{{\tilde{F}_0}}\ar[u]^{\tilde{F}_1}& \dots\\
\Omega^0(A,E^0) \ar[r]^-{{\tilde{F}_0}}\ar[u]^{\tilde{F}_1} &
\Omega^0(A,E^1)
\ar[r]^-{{\tilde{F}_0}}\ar[u]^{\tilde{F}_1} & \Omega^0(A,E^2)\ar[r]^-{{\tilde{F}_0}}\ar[u]^{\tilde{F}_1}& \dots }
\end{eqnarray*}
Now, the van Est isomorphism for ordinary representations, Theorem $4$ of \cite{Crainic}, guarantees that this map induces an
isomorphism in the vertical cohomology up to degree $l$. A standard spectral sequence argument implies that the map
also induces isomorphisms in the total cohomology up to degree $l$.

\end{proof}

A left action of a Lie groupoid $G$ on a manifold $P \stackrel{\nu}{\rightarrow} M$ over
$M$ is a map $G_1\times_{M} P \rightarrow P$ defined on the space $G_1\times_{M} P$ of
pairs $(g, p)$ with $s(g)= \nu(p)$, which satisfies $\nu(gp)= t(g)$ and is compatible with the 
units and the composition in $G$. Given an action of $G$ on
$P\stackrel{\nu}{\rightarrow} M$ there is an action groupoid, denoted by $G\ltimes P$. The base of this groupoid is $P$, 
the space of arrows is $G_1\times_{M}P$, the source map is the second projection, while the target map
is the action. The multiplication in this groupoid is $(g, p)(h, q)= (gh, q)$.

For all $k\geq 0$ a Lie groupoid $G$ acts on the manifold $G_{k+1} \stackrel{t}{\rightarrow }M$ with action given by the formula
\[ g  (g_1,\dots,g_{k+1})= (gg_1,\dots,g_{k+1}). \]
We will denote the action groupoid associated to this action by $G^{(k)}$ and set $G^{(-1)}=G$. Observe that $G^{(k)}$ is isomorphic to the groupoid associated to 
the surjective submersion $ d_0:G_{k+1} \rightarrow G_{k}$, where we use the simplicial face maps introduced in equation (\ref{simplicial faces}). 
In particular, $G^{(k)}$ is an elementary groupoid. There are natural diffeomorphisms

\[G_0^{(k)}\cong G_{k+1}, \,G_1^{(k)} \cong G_{k+2},\] 
and the source and target maps correspond to $d_0$ and $d_1$ respectively. We will use these identifications freely from now on.
For $i=0,\dots, k,$ there are morphisms of Lie groupoids $\flat_i:G^{(k)}\rightarrow G^{(k-1)}$ defined as follows: At the level of morphisms, $\flat_i$ is given by
$d_{i+2}:G_{k+2} \rightarrow G_{k+1}$ and at the level of objects it is $d_{i+1}:G_{k+1}\rightarrow G_k$. 
\begin{lemma}\label{lemma:simplicial}
The following statements hold:
\begin{enumerate}
\item The sequence $(G^{(k)})_{k\geq 0}$ together with the morphisms $\flat_i$ form a semisimplicial groupoid
\[  
\xymatrix{
\cdots G^{(2)} \ar@ <+9 pt>[r]^{\flat_0} \ar@ <0 pt>[r]^{\flat_1}\ar@ <-9 pt>[r]_{\flat_2} &    G^{(1)}\ar@ <+4 pt>[r]^{\flat_0} \ar@ <-4 pt>[r]_{\flat_1} & G^{(0)}.
}  
\]
\item Let $\pi: G^{(k)} \rightarrow G$ be the morphism of Lie groupoids defined on arrows by $(g_1,\dots,g_{k+2}) \mapsto g_{1}$, and on objects by $(g_1,\dots,g_{k+1}) \mapsto t(g_1)$.
Any morphism $\gamma:G^{(k)} \rightarrow G$ which is obtained by composing the morphisms $\flat_i$ is equal to $\pi$.

\end{enumerate}
\end{lemma}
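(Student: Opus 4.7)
The plan is to reduce both statements to the simplicial identities $d_i d_j = d_{j-1} d_i$ (for $i<j$) of the nerve $(G_\bullet, d_\bullet)$ of $G$, together with the analogous identities involving the degeneracies.

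For part (1), I would first verify that each $\flat_i : G^{(k)} \to G^{(k-1)}$ is a morphism of Lie groupoids. Under the identifications given in the statement, the source and target of $G^{(j)}$ are $d_0, d_1 : G_{j+2} \to G_{j+1}$ and the unit is the degeneracy $s_0 : G_{j+1} \to G_{j+2}$. Compatibility of $\flat_i$ with source, target, units and multiplication becomes, in each case, an instance of the simplicial identities of the nerve; for example, $s \circ \flat_i^{\mathrm{arr}} = \flat_i^{\mathrm{obj}} \circ s$ unwinds to $d_0 \circ d_{i+2} = d_{i+1} \circ d_0$, a case of $d_i d_j = d_{j-1} d_i$ with $0 < i+2$. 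Once functoriality is in place, the semisimplicial identity $\flat_i \flat_j = \flat_{j-1} \flat_i$ for $i<j$ reads $d_{i+2} d_{j+2} = d_{j+1} d_{i+2}$ on arrows and $d_{i+1} d_{j+1} = d_j d_{i+1}$ on objects, both immediate consequences of the nerve's simplicial identities.

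For part (2), the key observation is that each $\flat_i$ acts on arrows via $d_{i+2}$, whose index is at least $2$, and on objects via $d_{i+1}$, whose index is at least $1$. On the arrow side, any face map $d_\ell$ with $\ell \geq 2$ applied to $(h_1,\dots,h_n) \in G_n$ either merges $h_\ell h_{\ell+1}$ (if $\ell<n$) or drops $h_n$ (if $\ell=n$); in either case the first entry $h_1$ is untouched. Hence any composition of $k+1$ such $\flat$'s, which brings $G_{k+2}$ down to $G_1 = G$, must leave $g_1$ unaltered and contract the remaining entries, producing $g_1$, which is $\pi$ on arrows. On the object side, face maps with index $\geq 1$ can touch $g_1$ only through $d_1 : (g_1, g_2, \dots) \mapsto (g_1 g_2, g_3, \dots)$; but this preserves the target, since $t(g_1 g_2) = t(g_1)$. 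The final step from $G_1$ to $G_0$ must moreover be $d_1 = t$, because $d_0$ never occurs as a $\flat_i$. So the output on objects is $t(g_1)$, matching $\pi$.

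There is no real obstacle: the entire lemma is bookkeeping with simplicial identities, and the only mildly delicate point is recognising that, on objects, some $\flat_i$'s do merge $g_1$ with $g_2$, but this still leaves $t(g_1)$ invariant.
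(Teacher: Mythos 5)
Your proposal is correct and follows essentially the same route as the paper: part (1) is reduced to the simplicial identities of the nerve $(G_k)_{k\geq 0}$, and part (2) follows from the observation that on arrows each $\flat_i$ acts by $d_{i+2}$, a face map of index $\geq 2$, hence never alters the first component. The only difference is that you also verify the object-level claim directly (via $t(g_1g_2)=t(g_1)$ and the final $d_1=t$), which the paper leaves implicit; this is harmless extra detail.
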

\begin{proof}
The first claim is a formal consequence of the fact that the operators $d_i$ give the sequence $(G_k)_{k\geq 0}$ the structure of a semisimplicial manifold.
For the second part we observe that $\flat_i:G^{(k)} \rightarrow G^{(k-1)}$ is given on arrows by $d_{i+2}:G_{k+2}\rightarrow G_{k+1}$, and therefore
it does not change the first component.
\end{proof}

\begin{remark}
In order to simplify the notation, we will write $E$ for the representation up to homotopy $\pi^*(E)$ of $G^{(k)}$. The morphism $\flat_i$ induces a pullback map:
\[ \flat_i^*:\hat{C}(G^{(k-1)},E) \rightarrow \hat{C}(G^{(k)},\flat_i^*(E)).\]  In view of Lemma \ref{lemma:simplicial}, we know that $\flat_i^*(E)=E$, so we can write:
\[ \flat_i^*:\hat{C}(G^{(k-1)},E) \rightarrow \hat{C}(G^{(k)},E).\] 
\end{remark}

We will need the following fact.

\begin{lemma} \label{lemma:exact sequence}
 Let   $\flat^*:\hat{C}(G^{(k-1)},E) \rightarrow \hat{C}(G^{(k)},E)$ be the linear map
\[\flat^*:=\sum_{i=0}^k(-1)^i{ \flat_i}^*.\]
Then, the sequence of maps
\begin{equation}\label{firstexact}
 0 \rightarrow \hat{C}(G,E)^{n} \stackrel{\flat^*}{\rightarrow}
\hat{C}(G^{(0)},E)^{n}
 \stackrel{\flat^*}{\rightarrow} \hat{C}(G^{(1)},E)^{n} \stackrel{\flat^*}{\rightarrow}\hat{C}(G^{(2)},E)^{n} \stackrel{\flat^*}{\rightarrow} \cdots, 
 \end{equation}
 is a long exact sequence. Moreover, applying the functor $\Psi$, one obtains a long exact sequence:
\begin{equation}\label{secondexact}
0 \rightarrow \Omega(A,E)^{n} \stackrel{\flat^*}{\rightarrow}   \Omega(A^{(0)},E)^{n}
 \stackrel{\flat^*}{\rightarrow} \Omega(A^{(1)},E)^{n} \stackrel{\flat^*}{\rightarrow} \Omega(A^{(2)},E)^{n} \stackrel{\flat^*}{\rightarrow} \cdots
 \end{equation}
where $A^{(k)}$ denotes the Lie algebroid of $G^{(k)}$.
\end{lemma}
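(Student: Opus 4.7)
The plan is to exhibit an explicit contracting homotopy for each sequence, in the spirit of the classical ``extra degeneracy'' argument for augmented cosimplicial objects.

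For (\ref{firstexact}), the first step is to make precise the identification of the manifold of composable $n$-tuples $(G^{(k)})_n$ with $G_{n+k+1}$: an element corresponds to a tuple in which the first $n$ entries are the acting arrows in $G$ and the last $k+1$ entries are the object of $G^{(k)}=G\ltimes G_{k+1}$. Under this identification $\hat{C}^n(G^{(k)},E)$ becomes the space of sections of $t^*E$ over $G_{n+k+1}$ that are normalized in the first $n$ arguments, and $\flat_i^*$ becomes pullback along the simplicial face map $d_{n+i+1}:G_{n+k+1}\to G_{n+k}$ for $i=0,\dots,k$.

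I then define
\[
h_k:\hat{C}^n(G^{(k)},E)\to\hat{C}^n(G^{(k-1)},E),\qquad (h_k f)(g_1,\dots,g_{n+k}):=f(g_1,\dots,g_n,1_{s(g_n)},g_{n+1},\dots,g_{n+k}),
\]
that is, pullback along the degeneracy $s_n:G_{n+k}\to G_{n+k+1}$ inserting a unit at position $n+1$ (for $k=0$ this simply appends a unit at the end). The result is still a normalized cochain because the inserted unit lies outside the first $n$ positions. The standard simplicial identities $d_{n+1}\circ s_n=\id$ and $d_{n+i+1}\circ s_n=s_n\circ d_{n+i}$ for $i\geq 1$ then translate into the cochain homotopy identity
\[
h_{k+1}\circ\flat^* + \flat^*\circ h_k = \id
\]
on $\hat{C}^n(G^{(k)},E)$ for $k\geq 0$, together with $h_0\circ\flat^*=\id$ on $\hat{C}^n(G,E)$. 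This establishes exactness of (\ref{firstexact}).

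For (\ref{secondexact}), since $G^{(k)}=G\ltimes G_{k+1}$ is an action groupoid, its Lie algebroid is canonically isomorphic to the pullback $A^{(k)}\cong t^*A\to G_{k+1}$. Hence $\Omega^n(A^{(k)},E)\cong \Gamma(G_{k+1},\Lambda^n t^*A^*\otimes t^*E)$, and each $\flat_i^*$ is pullback of sections along $d_{i+1}:G_{k+1}\to G_k$. The analogous contracting homotopy is pullback along the degeneracy $s_0:G_k\to G_{k+1}$ inserting a unit at position $1$; since $t\circ s_0=t$, this gives a well-defined map $\Omega^n(A^{(k)},E)\to\Omega^n(A^{(k-1)},E)$, and the simplicial identities $d_1\circ s_0=\id$ and $d_{i+1}\circ s_0=s_0\circ d_i$ for $i\geq 1$ yield exactness of (\ref{secondexact}) by the same formal argument.

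The main obstacle is purely bookkeeping: checking the identification of $(G^{(k)})_n$ and $A^{(k)}$ with the appropriate components of the nerve of $G$, tracking the correspondence of $\flat_i^*$ with pullbacks by the simplicial faces, and verifying that the insertion of a unit produces a normalized cochain. Once this setup is in place, the existence of the extra degeneracy $s_n$ (respectively $s_0$) makes the exactness of both sequences a formal consequence of the classical contracting homotopy argument.
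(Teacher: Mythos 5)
Your proposal is correct and follows essentially the same route as the paper's proof: the same identification of $(G^{(k)})_n$ with $G_{n+k+1}$ (cochains normalized in the first $n$ arguments), the same translation of $\flat_i^*$ into the face maps $d_{n+i+1}^*$, and the same contracting homotopy given by inserting a unit just after the $n$-th entry, with exactness following from the simplicial identities. For the algebroid sequence the paper merely remarks that the identical operators appear there so the same proof applies; your explicit description via $A^{(k)}\cong t^*A$ and the degeneracy $s_0$ just spells this out.
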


\begin{proof}
Let us consider the first claim. We observe that there are natural diffeomorphisms:
\begin{eqnarray*}
\phi:G_m^{(k)}&\rightarrow& G_{k+m+1},\\
((g_1, h_1^1,\dots,h_{k+1}^1), \dots, (g_m, h_1^m,\dots,h_{k+1}^m))&\mapsto &(g_1,\dots,g_m,h_1^m,\dots,h_{k+1}^m).
\end{eqnarray*}
These diffeomorphisms induce isomorphisms of vector spaces:
\[\phi^*:C^{k+m+1}(G,E) \rightarrow C^m(G^{(k)},E).\]
Let us denote by $C_{m}^{k+m+1}(G,E)$ the subspace
\[C_{m}^{k+m+1}(G,E):= \left\{\eta \in C^{k+m+1}(G,E): \eta(g_1,\dots, g_i, 1 ,g_{i+2},\dots, g_{k+m+1})=0, \text{ for } i<m\right\}.\]
Then the isomorphism $\phi^*$ restricts to an isomorphism:
\[\phi^*:C_m^{k+m+1}(G,E) \rightarrow \hat{C}^m(G^{(k)},E).\]
Moreover, the diagram:
\begin{eqnarray*}
\xymatrix{
C_{m}^{k+m}(G,E) \ar[r]^{d_{i+m+1}^*}\ar[d]^{\phi^*} & C_{m}^{k+m+1}(G,E)\ar[d]^{\phi^*}\\
 C^m(G^{(k-1)},E) \ar[r]^{\flat_i^*} & C^m(G^{(k)},E) }
\end{eqnarray*}
commutes.
Thus, if we denote by $b^*:C_m^{k+m}(G,E)\rightarrow C_m^{k+m+1}(G,E)$ the map
\[b^*:=\sum_{i=0}^k(-1)^id_{i+m+1}^*,\]
it is sufficient to prove that the sequence
\begin{equation}\label{complex}
 0 \rightarrow C_m^m(G,E) \stackrel{b^*}{\rightarrow}
C_m^{m+1}(G,E)
 \stackrel{\flat^*}{\rightarrow} C_m^{m+2}(G,E) \stackrel{b^*}{\rightarrow}C_m^{m+3}(G,E)\stackrel{b^*}{\rightarrow} \cdots, 
 \end{equation}
is exact. Let $s^*: C_m^{m+k+1}(G,E) \rightarrow C_m^{m+k}(G,E)$ be defined by the formula:
\begin{equation*}
s^*(\eta)(g_1,\dots,g_{k+m})=\eta(g_1,\dots, g_m, 1,g_{m+1},\dots,g_{m+k}).
\end{equation*}
Then, one easily checks that:
\begin{eqnarray*}
 s^*d_{i+m+1}^*=
\begin{cases}
\id & \textrm{for } i=0,\\
 d_{i+m}^* s^*& \textrm{for } i>0.
\end{cases}
\end{eqnarray*}
These relations imply that $b^* s^*+s^*b^*=\id$. We conclude that the sequence (\ref{complex}) is exact.
Since all the operators in this proof are also present in the case of the sequence (\ref{secondexact}), the same 
proof applies.
\end{proof}

\begin{theorem}\label{isomorphism}
Let $G$ be a Lie groupoid with $l$-connected source fibers and suppose that \[E=\bigoplus_{k=a}^bE^k\] is a unital representation up to homotopy of $G$.
Then, the map induced in cohomology
\begin{equation*}
\Psi:H^n(G,E) \rightarrow H^n(A,\Psi(E)),
\end{equation*}
is an isomorphism in degrees $a \leq n\leq a+l$.
\end{theorem}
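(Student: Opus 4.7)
The plan is to apply Proposition \ref{easy} column-wise to a double complex resolving $\hat{C}(G,E)$ furnished by Lemma \ref{lemma:exact sequence}, and then use a standard spectral sequence comparison. Concretely, consider the double complexes
\[
K_G^{p,q} := \hat{C}(G^{(p)}, E)^q, \qquad K_A^{p,q} := \Omega(A^{(p)}, \Psi(E))^q,
\]
equipped with horizontal differentials $\flat^* = \sum_i (-1)^i \flat_i^*$ and vertical differentials inherited from the pulled-back representation structures on each $G^{(p)}$ and $A^{(p)}$. By Lemma \ref{lemma:exact sequence}, combined with the standard acyclic assembly argument, the natural augmentations yield quasi-isomorphisms $\hat{C}(G,E) \simeq \mathrm{Tot}(K_G)$ and $\Omega(A,\Psi(E)) \simeq \mathrm{Tot}(K_A)$. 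Thanks to the naturality recorded in Remark \ref{lemma:pullback_naturality}, the differentiation $\Psi$ lifts to a morphism of double complexes $K_G \to K_A$ whose induced map on total cohomology coincides, under these identifications, with the van Est map of Theorem \ref{theorem:differentiation_reps_up_to_homotopy}.

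For each fixed $p \geq 0$, the groupoid $G^{(p)}$ is elementary, and its source fiber through a point $(g_1,\dots,g_{p+1}) \in G_{p+1}$ is canonically diffeomorphic to $s_G^{-1}(t(g_1))$, hence $l$-connected. Proposition \ref{easy} therefore yields an isomorphism
\[
\Psi\colon H^q(G^{(p)}, E) \stackrel{\sim}{\longrightarrow} H^q(A^{(p)}, \Psi(E))
\]
in the range $a \leq q \leq a + l$ (and both $H^q$'s vanish for $q<a$, since $E$ is concentrated in degrees $[a,b]$). Filtering $K_G$ and $K_A$ by columns produces first-quadrant spectral sequences (after the shift $q \mapsto q-a$) converging to $H^{p+q}(G,E)$ and $H^{p+q}(A,\Psi(E))$, on whose $E_1$-pages $\Psi$ is an isomorphism throughout the stated range.

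The theorem then follows by a standard spectral sequence comparison argument, which can be phrased most cleanly via the mapping cone $C^{p,q}$ of $\Psi \colon K_G \to K_A$ viewed as a double complex: the column cohomology $H^q(C^{p,*})$ vanishes for $q \leq a+l$, so $E_\infty^{p,q}(C)=0$ in that range, and consequently $H^n(\mathrm{Tot}(C))=0$ for $n \leq a+l$; the cone long exact sequence finally delivers the isomorphism $\Psi\colon H^n(G,E) \to H^n(A,\Psi(E))$ for $a \leq n \leq a+l$. The main technical obstacle is to maintain the isomorphism at the top of the range $n=a+l$: this requires invoking not only the isomorphism in the $E_1$-page but also the injectivity at degree $q=a+l+1$ built into the proof of Proposition \ref{easy} (itself coming from the sharper form of the van Est theorem of \cite{Crainic} for ordinary representations), in exactly the same spectral sequence step that closes the proof of Proposition \ref{easy}.
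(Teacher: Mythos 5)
Your proposal is correct and follows essentially the same route as the paper's proof: the same double complex over the semisimplicial groupoids $G^{(k)}$, row-exactness from Lemma \ref{lemma:exact sequence}, the column-wise application of Proposition \ref{easy} to the elementary groupoids $G^{(k)}$ (whose source fibers agree with those of $G$), and a spectral sequence comparison of the two total complexes. The only notable difference is that you close the argument with a mapping cone and explicitly flag the need for injectivity in degree $a+l+1$ on the columns (coming from the sharper form of the van Est theorem of \cite{Crainic}) to secure the isomorphism at the top degree $n=a+l$ --- a boundary point that the paper subsumes under ``the usual spectral sequence argument.''
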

\begin{proof}
Clearly, we can assume that $a=0$.
Let us denote by $\flat^*:\hat{C}(G^{(k-1)},E) \rightarrow \hat{C}(G^{(k)},E)$ the linear map
\[\flat^*:=\sum_{i=0}^k(-1)^i \flat_i^*.\]

One can organize all these linear maps in a double complex as follows:
\begin{eqnarray*}
\xymatrix{
 & \vdots &\vdots &\vdots  & \\
0\ar[r]& \hat{C}(G,E)^{2} \ar[r]^-{{\flat}^*}\ar[u]^D &
\hat{C}(G^{(0)},E)^{2}
\ar[r]^-{\flat^*}\ar[u]^D & \hat{C}(G^{(1)},E)^{2}\ar[r]^-{\flat^*}\ar[u]^D& \dots\\
0\ar[r]&\hat{C}(G,E)^{1} \ar[r]^-{{\flat}^*}\ar[u]^D &
\hat{C}(G^{(0)},E)^{1}
\ar[r]^-{\flat^*}\ar[u]^D & \hat{C}(G^{(1)},E)^{1}\ar[r]^-{\flat^*}\ar[u]^D& \dots\\
0\ar[r]&\hat{C}(G,E)^{0} \ar[r]^-{{\flat}^*}\ar[u]^D &
\hat{C}(G^{(0)},E)^{0}
\ar[r]^-{\flat^*}\ar[u]^D & \hat{C}(G^{(1)},E)^{0}\ar[r]^-{\flat^*}\ar[u]^D& \dots }
\end{eqnarray*}

Here, the vertical operator $D$ is the differential corresponding to the representation up to homotopy $E$.
Note that $D$ commutes with $\flat^*$ because it commutes with each $\flat_i^*$ by Remark \ref{lemma:pullback_naturality}.
By Lemma \ref{lemma:exact sequence} we know that the rows of this double complex are acyclic. 
We denote by $\mathrm{Tot(G)}$ the total complex:
\[ \mathrm{Tot}^n(G)=\bigoplus_{k \geq 0} \hat{C}(G^{(k)},E)^{n-k}.\]

The general homological algebra argument
implies that the inclusion of the first column induces an isomorphism in cohomology:
\[ \flat^*: H(G, E) \rightarrow H(\mathrm{Tot}(G)). \]

By applying the functor $\Psi$ to the diagram above we obtain a double complex:

\begin{eqnarray*}
\xymatrix{
 & \vdots &\vdots &\vdots  & \\
0\ar[r]& \Omega(A,E)^{2} \ar[r]^-{{\flat}^*}\ar[u]^D &
\Omega(A^{(0)},E)^{2}
\ar[r]^-{\flat^*}\ar[u]^D & \Omega(A^{(1)};E)^{2}\ar[r]^-{\flat^*}\ar[u]^D& \dots\\
0\ar[r]&\Omega(A,E)^{1} \ar[r]^-{{\flat}^*}\ar[u]^D &
\Omega(A^{(0)},E)^{1}
\ar[r]^-{\flat^*}\ar[u]^D & \Omega(A^{(1)},E)^{1}\ar[r]^-{\flat^*}\ar[u]^D& \dots\\
0\ar[r]&\Omega(A,E)^{0} \ar[r]^-{{\flat}^*}\ar[u]^D &
\Omega(A^{(0)},E)^{0}
\ar[r]^-{\flat^*}\ar[u]^D & \Omega(A^{(1)},E)^{0}\ar[r]^-{\flat^*}\ar[u]^D& \dots }
\end{eqnarray*}
Again, by Lemma \ref{lemma:exact sequence} the rows of this diagram are exact.
We denote by $\mathrm{Tot(A)}$ the total complex:
\[ \mathrm{Tot}^n(A)=\bigoplus_{k \geq 0}\Omega(A^{(k)},E)^{n-k}.\]

As before, the inclusion of the first column induces an isomorphism in cohomology:
\[ \flat^*: H(A, E) \rightarrow H(\mathrm{Tot}(A)). \]

We know that for $k\geq 0$ the groupoids $G^{(k)}$ are elementary. Moreover, the source fibers of $G^{(k)}$ are diffeomorphic
to those of $G$. Therefore,  Proposition \ref{easy} implies that for $k\geq 0$ the map:
\[\Psi:  \hat{C}(G^{(k)},E)^{n} \rightarrow \Omega(A^{(k)},E)^{n} \]
induces isomorphisms up to degree $l$. The usual spectral sequence argument then implies that

\[\Psi:  \mathrm{Tot}(G) \rightarrow \mathrm{Tot}(A) \]
induces isomorphisms up to degree $l$.
We now consider the commutative diagram
\begin{eqnarray*}
\xymatrix{
H^n(G,E) \ar[r]^{\flat^*}\ar[d]^{\Psi} &H^n(\mathrm{Tot}(G))\ar[d]^{\Psi}\\
H^n(A,E) \ar[r]^{\flat^*}&H^n(\mathrm{Tot}(A)). 
}
\end{eqnarray*}
For $n \leq l$ three of the maps above are isomorphism, so the fourth one also is. This completes the proof.

\end{proof}

 \subsection{Differentiating the adjoint and deformations}\label{ss:connection}
Here we show that by applying the differentiation functor $\Psi$ to the adjoint representation of a Lie groupoid, one obtains the 
adjoint representation of the Lie algebroid. We then use this fact together with Theorem \ref{isomorphism} to prove Conjecture $1$ of \cite{CrainicMoerdijk}.

As before,  $A$ is the Lie algebroid of $G$ and $\sigma$ is a connection
on $G$. That is, $\sigma$ is a splitting of the short exact sequence
\begin{equation*}
 0\rightarrow \ker(ds) \rightarrow TG \rightarrow s^*(TM) \rightarrow 0,
\end{equation*}
which coincides with the natural splitting over $M$.
The choice of the connection $\sigma$ induces operators, $\omega$, $\overline{\sigma}$ and $\overline{\omega}$ 
as explained in Section \ref{preliminaries}. Given a vector field $X$ on $M$, we will denote by  $\hat{X}$ the horizontal vector field on $G$ determined by $X$,
namely: 
\[\hat{X}(g)=\sigma(g)(X(s(g))).\] 
For a section $\alpha \in \Gamma(A)$, we will denote by $\hat{\alpha}$ the right invariant vector field on $G$ associated with $\alpha$. We also define the vector fields:
\begin{eqnarray*}
\check{X}(g):=d\iota (\hat{X}(g^{-1})),\\
\check{\alpha}(g):=d\iota (\hat{\alpha}(g^{-1})),
\end{eqnarray*}
where $\iota:G \rightarrow G$ is the inverse map.
\begin{remark}
The connection $\sigma$ on $G$ induces a connection $\nabla$ on $A$ by the formula:
\begin{equation}
\nabla_X(\alpha)(p)=[\hat{X},\hat{\alpha}](p),\, p \in M.
\end{equation}
\end{remark}

The vector fields $\hat{\alpha}$ and $\check{X}$ are projectable with respect to the target  map $t$ and they project onto the
vector fields $\rho(\alpha)$ and $X$, respectively. This implies the identity
\begin{eqnarray*}
[\rho(\alpha),X]=dt\left([\hat{\alpha},\check{X}]\right).
\end{eqnarray*}

\begin{lemma}
Let $ad_{\nabla}(A)$ be the adjoint representation of $A$ induced by $\nabla$, where $\nabla$ is determined by $\sigma$ as above.
Then the  $A$-connection part of $ad_{\nabla}(A)$ is given by the formulas:

 \begin{eqnarray*}
 \nabla^{bas}_{\alpha}(X)&=&[\rho(\alpha),X]-dt[\hat{\alpha},\hat{X}],\\ 
\nabla^{bas}_{\alpha}(\beta)&=&[\hat{\alpha},\hat{\beta}]-[\hat{\alpha},\widehat{\rho(\beta)}].
\end{eqnarray*}

\end{lemma}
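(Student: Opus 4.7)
The plan is a direct computation, unwinding the definition of the basic connection of $\ad_\nabla(A)$ and substituting the description of $\nabla$ in terms of $\sigma$ given in the preceding remark, namely $\nabla_X(\alpha)(p) = [\hat{X},\hat{\alpha}](p)$ for $p \in M$.

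For the formula for $\nabla^{bas}_\alpha(\beta)$, I start from the definition $\nabla^{bas}_\alpha(\beta) = \nabla_{\rho(\beta)}(\alpha) + [\alpha,\beta]$. The second summand becomes $[\hat{\alpha},\hat{\beta}]\big|_M$ since the Lie algebroid bracket of two sections coincides with the restriction to $M$ of the bracket of their right-invariant extensions, i.e.\ $\widehat{[\alpha,\beta]} = [\hat{\alpha},\hat{\beta}]$. The first summand, applying the remark with $X = \rho(\beta)$, becomes $[\widehat{\rho(\beta)},\hat{\alpha}]\big|_M = -[\hat{\alpha},\widehat{\rho(\beta)}]\big|_M$. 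Adding these two contributions produces exactly the claimed expression $[\hat{\alpha},\hat{\beta}] - [\hat{\alpha},\widehat{\rho(\beta)}]$ (interpreted as sections of $A$ by restriction to $M$).

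For the formula for $\nabla^{bas}_\alpha(X)$, I start from $\nabla^{bas}_\alpha(X) = \rho(\nabla_X(\alpha)) + [\rho(\alpha),X]$. The second summand is already in the desired form. For the first, apply the remark to write $\nabla_X(\alpha)\big|_M = [\hat{X},\hat{\alpha}]\big|_M$, so that $\rho(\nabla_X(\alpha)) = \rho\bigl([\hat{X},\hat{\alpha}]\big|_M\bigr)$. The key step here is to justify replacing $\rho$ by $dt$: since $\hat{\alpha}$ is right-invariant and hence tangent to the $s$-fibers, it is $s$-projectable to $0$, while $\hat{X}$ is $s$-projectable to $X$; therefore $[\hat{X},\hat{\alpha}]$ is $s$-projectable to $[X,0]=0$, so $[\hat{X},\hat{\alpha}]\big|_M$ takes values in $\ker(ds)|_M = A$. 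On $A$ the anchor $\rho$ equals $dt$, so $\rho(\nabla_X(\alpha)) = dt\bigl([\hat{X},\hat{\alpha}]\big|_M\bigr) = -dt\bigl([\hat{\alpha},\hat{X}]\big|_M\bigr)$, yielding the claimed formula.

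No serious obstacle is expected; the only subtle point is the $s$-tangency argument that allows the swap $\rho \leftrightarrow dt$ in the second formula. The rest of the argument is straightforward bookkeeping with the sign conventions of the adjoint representation and the defining properties of right-invariant and horizontal lifts.
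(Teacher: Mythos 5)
Your proposal is correct, and it matches the paper's treatment: the paper states this lemma without proof, as an immediate consequence of the definitions, and your argument — substituting $\nabla_X(\alpha)|_M=[\hat{X},\hat{\alpha}]|_M$ from the preceding remark into the defining formulas $\nabla^{bas}_{\alpha}(\beta)=\nabla_{\rho(\beta)}(\alpha)+[\alpha,\beta]$ and $\nabla^{bas}_{\alpha}(X)=\rho(\nabla_X(\alpha))+[\rho(\alpha),X]$, together with $\widehat{[\alpha,\beta]}=[\hat{\alpha},\hat{\beta}]$ and $\rho=dt$ on $A=\ker(ds)|_M$ — is exactly the intended computation. The one point needing care, namely that $[\hat{X},\hat{\alpha}]|_M$ lies in $\ker(ds)|_M$ (via $s$-projectability) so that applying $\rho$ amounts to applying $dt$, is handled correctly in your write-up.
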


\begin{proposition}\label{Ad=ad}
Let $G$ be a Lie groupoid equipped with a connection $\sigma$ and denote by $\nabla$
the induced connection on $A$. Then
\begin{equation*} 
\Psi(\Ad_{\sigma}(G))=\ad_{\nabla}(A).
\end{equation*}
\end{proposition}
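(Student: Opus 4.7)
By Theorem \ref{theorem:differentiation_reps_up_to_homotopy}, $\Psi(\Ad_\sigma(G))$ is the representation up to homotopy of $A$ carried by the graded bundle $A\oplus TM$ whose structure operator decomposes as
\[
\Psi(D)=\hat\Psi(\partial)+\overline{\Psi}(\lambda)+\hat\Psi(K_\sigma).
\]
Since $\ad_\nabla(A)=\partial+\nabla^{bas}+K_\nabla$ lives on the same graded bundle and has the same anchor $\partial=\rho$, the statement reduces to verifying the two identities $\overline{\Psi}(\lambda)=\nabla^{bas}$ in $\Pi(A,A\oplus TM)$ and $\hat\Psi(K_\sigma)=K_\nabla$ in $\Omega^2(A,\End^{-1}(A\oplus TM))$.

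\textbf{Connection part.} The plan is to compute $\overline{\Psi}(\lambda)_\alpha(\gamma)(p)$ directly from the definition of $\overline{\Psi}$, separately for $\gamma=X\in\Gamma(TM)$ and $\gamma=\beta\in\Gamma(A)$. For $\gamma=X$, using $\lambda_g(X)=dt(\hat X(g))$ the definition becomes
\[
\overline{\Psi}(\lambda)_\alpha(X)(p)=\frac{d}{d\epsilon}\Big|_{\epsilon=0} dt(\hat X(\Phi^\alpha_\epsilon(p)^{-1})).
\]
The curve $\epsilon\mapsto \Phi^\alpha_\epsilon(p)^{-1}$ is the integral curve of $\check\alpha$ through $p$, so this equals $dt[\check\alpha,\hat X](p)$. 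Adding and subtracting $dt[\hat\alpha,\hat X](p)$ and invoking the identity $[\rho(\alpha),X]=dt[\hat\alpha,\check X](p)$ recalled immediately before the statement (together with $\check Y(p)=d\iota(Y(p))$ and $dt\circ d\iota=ds$), one recovers exactly $[\rho(\alpha),X]-dt[\hat\alpha,\hat X]$, which coincides with $\nabla^{bas}_\alpha(X)$ by the Lemma preceding the proposition. A parallel calculation for $\gamma=\beta\in\Gamma(A)$, starting from $\lambda_g(\beta)=-\omega_g(l_g(\beta))$ and using $\omega\circ r=\mathrm{Id}$ together with the definition of $\nabla$ via horizontal lifts, produces $[\hat\alpha,\hat\beta]-[\hat\alpha,\widehat{\rho(\beta)}]=\nabla^{bas}_\alpha(\beta)$.

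\textbf{Curvature part.} Since $K_\sigma\in \hat C^2_G(\End^{-1}(A\oplus TM))$, the grading sign $(-1)^{km}$ in Definition \ref{def:differentiation_end_valued_things} is trivial, so
\[
\hat\Psi(K_\sigma)(\alpha,\beta)=\hat R_\beta\hat R_\alpha K_\sigma - \hat R_\alpha\hat R_\beta K_\sigma.
\]
Substituting $K_\sigma(g,h)(X)=(dm)_{g,h}(\sigma_g(\lambda_h(X)),\sigma_h(X))-\sigma_{gh}(X)$ and differentiating twice along the flows of the right-invariant vector fields associated to $\alpha$ and $\beta$, one obtains a sum of cross terms coming from the derivatives of $dm$, $\sigma$ and $\lambda$. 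After antisymmetrization in $(\alpha,\beta)$, these contributions are to be identified with the five summands of
\[
K_\nabla(\alpha,\beta)(X)=\nabla_X[\alpha,\beta]-[\nabla_X\alpha,\beta]-[\alpha,\nabla_X\beta]-\nabla_{\nabla^{bas}_\beta X}\alpha+\nabla_{\nabla^{bas}_\alpha X}\beta.
\]
The three bracket terms arise from the derivatives of $dm$ and $\sigma_{gh}$ together with $\nabla_X\alpha=[\hat X,\hat\alpha]|_M$, while the two remaining $\nabla^{bas}$-terms come from the mixed derivatives of $\sigma_g\circ\lambda_h$ via the formula for $\overline{\Psi}(\lambda)$ established in the previous step.

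\textbf{Main obstacle.} The connection step is essentially a one-parameter computation and is routine once the definitions are unfolded. The curvature step is the substantive one: no new conceptual ingredient enters beyond the chain rule and the identities already used for $\overline{\Psi}(\lambda)$, but correctly keeping track of all cross terms produced by differentiating $(dm)_{g,h}(\sigma_g\lambda_h X,\sigma_h X)-\sigma_{gh}X$ at the identity $(1_p,1_p)$ in two parameters and matching them against the five-term formula for $K_\nabla$ is the delicate part. This bookkeeping is where the real work lies.
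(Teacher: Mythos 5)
Your reduction to the two identities $\overline{\Psi}(\lambda)=\nabla^{bas}$ and $\hPsi(K_\sigma)=K_\nabla$ is exactly how the paper proceeds, and your connection-part computation is essentially the paper's (the paper organizes the derivative through a two-parameter function $W(\epsilon,\zeta)$ rather than your add-and-subtract of $dt[\hat\alpha,\hat X](p)$, but it is the same calculation). The genuine gap is the curvature step. What you give there is not a proof but a statement that a proof should exist: you assert that after differentiating $K_\sigma(g,h)(X)=(dm)_{g,h}(\sigma_g(\lambda_h(X)),\sigma_h(X))-\sigma_{gh}(X)$ twice along the flows and antisymmetrizing, the cross terms ``are to be identified with'' the five summands of $K_\nabla$ --- and you yourself concede that this identification is ``where the real work lies.'' But that identification \emph{is} the remaining content of the proposition; the attribution of the bracket terms to the derivatives of $dm$ and $\sigma_{gh}$, and of the $\nabla^{bas}$-terms to the mixed derivatives of $\sigma_g\circ\lambda_h$, is an unverified claim. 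The computation is second order (it involves the two-jet of $\sigma$ and of the multiplication map along the unit section), and none of the identities you invoke guarantees in advance that the terms match without further cancellations; so as written the central identity is assumed rather than proved.

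It is worth knowing that the paper deliberately refuses to do this computation and replaces it by a formal argument, which you could adopt to close the gap. Since you have already shown that $\Psi(\Ad_\sigma(G))$ and $\ad_\nabla(A)$ are representations up to homotopy on $A\oplus TM$ with the same $F_0=\partial=\rho$ and the same connection part $\nabla^{bas}$, the structure equation of Proposition \ref{Rep-long} in form-degree two forces $\rho\circ\hPsi(F_2)=\rho\circ K_\nabla$ (both equal the $TM$-component of minus the curvature of $\nabla^{bas}$). Hence if the anchor is \emph{injective}, $\hPsi(F_2)=K_\nabla$ with no computation at all. The general case is reduced to this one: the groupoid $G^{(0)}$ associated with the action of $G$ on itself has injective anchor, the connection $\sigma$ induces a connection $\tilde\sigma$ on $G^{(0)}$, and the morphism $\flat_0:G^{(0)}\to G$, together with the naturality of $\Psi$ under pullback (Remark \ref{lemma:pullback_naturality}) and the fact that $D\flat_0:\Ad_{\tilde\sigma}(G^{(0)})\to\flat_0^*(\Ad_\sigma(G))$ is a surjective morphism of representations up to homotopy, gives $\flat_0^*(\ad_\nabla(A))=\flat_0^*(\Psi(\Ad_\sigma(G)))$; since $\flat_0$ is a submersion, $\flat_0^*$ is injective, and the proposition follows. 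If you prefer your direct route, you must actually carry out and display the two-parameter differentiation and the term-by-term matching; otherwise substitute the argument above for your final step.
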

\begin{proof}
Let us write the structure operator of $\Ad_{\sigma}(G)$ as:
\[D=\tilde{F_0} +\tilde{F_1}+\tilde{F_2}.\]
Clearly, $\hat{\Psi}(F_0)=\hat{\Psi}(\rho)=\rho$ is the coboundary operator in $\ad_{\nabla}(A)$.
Next, we need to prove that
\begin{equation}
\nabla^{bas}_{\alpha}(X)=\bar{\Psi}(F_1)_\alpha(X).
\end{equation}
Let us fix a point $p \in M$ and set $U=dt\left([\hat{\alpha},\check{X}]\right)(p)=[\rho(\alpha),X](p)$, $V=-dt\left([\hat{\alpha},\hat{X}]\right)(p)$ and $Z=\bar{\Psi}(F_1)_\alpha(X)(p)$;
we have seen that $\nabla_{\alpha}(X)= U+V$.
We need to prove that $U+V=Z$.
If we denote by $\Phi$ the right invariant flow associated to $\hat{\alpha}$, the flow associated to
the vector field $\rho(\alpha)$ is given by $t \Phi_{\epsilon}$. More explicitly:
\begin{eqnarray*}
U= \left.\frac{d}{d\epsilon}\right|_{\epsilon=0}Dt\circ D\Phi_{-\epsilon}(X(t\Phi_{\epsilon}(p))),\\
V=- \left. \frac{d}{d\epsilon}\right|_{\epsilon=0}Dt \circ D\Phi_{-\epsilon}( \sigma_{\Phi_{\epsilon}(p)}X(p)).
\end{eqnarray*}

On the other hand, using the property that
\begin{equation*}
(\Phi_{\epsilon}(p))^{-1}=\Phi_{-\epsilon}(t(\Phi_{\epsilon}(p)),
\end{equation*}
we compute:

\begin{eqnarray*}
 Z=\bar{\Psi}(F_1)_\alpha(X)(p)&=&\left. \frac{d}{d\epsilon}\right|_{\epsilon=0}F_1((\Phi_{\epsilon}(p))^{-1})(X(t(\Phi_{\epsilon}(p))))\\
&=&\left. \frac{d}{d\epsilon}\right|_{\epsilon=0}F_1(\Phi_{-\epsilon}(t(\Phi_{\epsilon}(p))))(X(t(\Phi_{\epsilon}(p))))\\
&=&\left. \frac{d}{d\epsilon}\right|_{\epsilon=0}Dt ( \sigma_{(\Phi_{-\epsilon}(t(\Phi_{\epsilon}(p))))}(X(t(\Phi_{\epsilon}(p))))).\\
\end{eqnarray*}
This expression can be written as:

\begin{equation*}
\left.\frac{d}{d\epsilon}\right|_{\epsilon=0}W(\epsilon, \epsilon),
\end{equation*} 
where
\begin{eqnarray*}
W(\epsilon,\zeta)=  Dt\circ D\Phi_{-\epsilon}\circ D\Phi_{\zeta}  (\sigma_{\Phi_{-\zeta}(t(\Phi_{\epsilon}(p))}(X(t(\Phi_{\epsilon}(p))))).
\end{eqnarray*}
Clearly, 
\begin{equation*}
\left.\frac{d}{d\epsilon}\right|_{\epsilon=0}W(\epsilon, \epsilon)= \left.\frac{d}{d\epsilon}\right|_{\epsilon=0}W(\epsilon, 0)+\left.\frac{d}{d\epsilon}\right|_{\epsilon=0}W(0,\epsilon).
\end{equation*} 

Thus, we obtain:
\begin{eqnarray*}
Z&=&\left. \frac{d}{d\epsilon}\right|_{\epsilon=0}Dt \circ D\Phi_{-\epsilon}( \sigma_{t(\Phi_{\epsilon}(p))}(X(t(\Phi_{\epsilon}(p)))))
+\left. \frac{d}{d\epsilon}\right|_{\epsilon=0}Dt \circ D\Phi_{\epsilon} ( \sigma_{\Phi_{-\epsilon}(p)}(X(p)))\\
&=&\underbrace{\left. \frac{d}{d\epsilon}\right|_{\epsilon=0}Dt \circ D\Phi_{-\epsilon}(X(t(\Phi_{\epsilon}(p))))}_U
+\underbrace{\left. \frac{d}{d\epsilon}\right|_{\epsilon=0}Dt \circ D\Phi_{\epsilon} ( \sigma_{\Phi_{-\epsilon}(p)}(X(p)))}_V\\
&=& U+V.
\end{eqnarray*}

Here we have used the fact that the connection on $G$ coincides with the canonical one at the identities.
We will now prove that 
\begin{equation*}
\nabla^{bas}_{\alpha}(\beta)=\bar{\Psi}(F_1)_{\alpha}(\beta).
\end{equation*}
Let us set $Q=[\hat{\alpha},\hat{\beta}](p)$, $R=-[\hat{\alpha},\widehat{\rho(\beta)}](p)$ and $S=\bar{\Psi}(F_1)_{\alpha}(\beta)(p)$.
We need to prove that $Q+R=S$.
To this end, we expand $Q$ and $R$:
\begin{eqnarray*}
 Q&=&\left. \frac{d}{d\epsilon}\right|_{\epsilon=0}D\Phi_{-\epsilon} (\beta (\Phi_{\epsilon}(p))),\\
 R&=&-\left. \frac{d}{d\epsilon}\right|_{\epsilon=0}D\Phi_{-\epsilon} (\sigma_{\Phi_{\epsilon}(p)}(\rho(\beta)(p)))
\end{eqnarray*}
and compute $S$:
\begin{eqnarray*}
 S=\bar{\Psi}(F_1)_\alpha(\beta)(p)&=&\left. \frac{d}{d\epsilon}\right|_{\epsilon=0}F_1((\Phi_{\epsilon}(p))^{-1})(\beta(t(\Phi_{\epsilon}(p))))\\
&=&\left. \frac{d}{d\epsilon}\right|_{\epsilon=0}F_1(\Phi_{-\epsilon}(t(\Phi_{\epsilon}(p))))(\beta(t(\Phi_{\epsilon}(p)))).
\end{eqnarray*}
This expression can be written as:
\begin{equation*}
\left.\frac{d}{d\epsilon}\right|_{\epsilon=0}T(\epsilon, \epsilon),
\end{equation*} 
where
\begin{eqnarray*}
T(\epsilon,\zeta)=D\Phi_{-\epsilon}\circ D\Phi_{\zeta}(F_1(\Phi_{-\zeta}(t\Phi_{\epsilon}(p)))(\beta(t(\Phi_{\epsilon}(p))))).
\end{eqnarray*}
Clearly, 
\begin{equation*}
\left.\frac{d}{d\epsilon}\right|_{\epsilon=0}T(\epsilon, \epsilon)= \left.\frac{d}{d\epsilon}\right|_{\epsilon=0}T(\epsilon, 0)+\left.\frac{d}{d\epsilon}\right|_{\epsilon=0}T(0,\epsilon).
\end{equation*} 

Thus,  we obtain:
\begin{eqnarray*}
S&=&\left.\frac{d}{d\epsilon}\right|_{\epsilon=0}D\Phi_{-\epsilon}\circ F_1(t\Phi_{\epsilon}(p))(\beta(t\Phi_{\epsilon}(p)))
+\left.\frac{d}{d\epsilon}\right|_{\epsilon=0}D\Phi_{\epsilon}\circ F_1(\Phi_{-\epsilon}(p))(\beta(p))\\
&=&\underbrace{ \left.\frac{d}{d\epsilon}\right|_{\epsilon=0}D\Phi_{-\epsilon}(\beta(t \Phi_{\epsilon}(p)))}_Q
+\underbrace{\left.\frac{d}{d\epsilon}\right|_{\epsilon=0}D\Phi_{\epsilon}\circ F_1(\Phi_{-\epsilon}(p))(\beta(p))}_{R'}\\
&=&Q+R'.
\end{eqnarray*}
We only need to show that $R=R'$. For this we compute:
\begin{eqnarray*}
 R'&=&\left.\frac{d}{d\epsilon}\right|_{\epsilon=0}D\Phi_{\epsilon}\circ F_1(\Phi_{-\epsilon}(p))(\beta(p))\\
&=& -\left.\frac{d}{d\epsilon}\right|_{\epsilon=0}D\Phi_{\epsilon}\circ \omega_{\Phi_{-\epsilon}(p)} \circ l_{\Phi_{-\epsilon}(p)}(\beta(p))\\
&=& \left.\frac{d}{d\epsilon}\right|_{\epsilon=0}D\Phi_{-\epsilon}\circ \omega_{\Phi_{\epsilon}(p)} \circ l_{\Phi_{\epsilon}(p)}(\beta(p))\\
&=& \left.\frac{d}{d\epsilon}\right|_{\epsilon=0}D\Phi_{-\epsilon}\circ l_{\Phi_{\epsilon}(p)}(\beta(p))
- \left.\frac{d}{d\epsilon}\right|_{\epsilon=0}D\Phi_{-\epsilon}\circ \sigma_{\Phi_{\epsilon}(p)}\circ Ds \circ l_{\Phi_{\epsilon}(p)}(\beta(p))\\
&=&[\hat{\alpha},\check{\beta}](p)- \left.\frac{d}{d\epsilon}\right|_{\epsilon=0}D\Phi_{-\epsilon}\circ \sigma_{\Phi_{\epsilon}(p)}\circ Dt(\beta(p))\\
&=&0+R=R.
\end{eqnarray*}

We are left with proving that $\bar{\Psi}(F_2)=K_{\nabla}$. Instead of computing this directly, we will show
that it is a formal consequence of the previous equations and the naturality of the differentiation process
with respect to pullback, as explained in Remark \ref{lemma:pullback_naturality}. First, we observe that the statement is true for a groupoid with injective anchor
map, as a consequence of the structure equations for a representation up to homotopy.
Using the notation of Subsection \ref{subsection:isomorphism}, consider the Lie groupoid $G^{(0)}$ associated to the action of $G$ on itself, and
the morphism of Lie groupoids $\flat_0: G^{(0)} \rightarrow G$. 
The connection $\sigma$ on $G$ induces naturally a connection $\tilde{\sigma}$ on $G^{(0)}$. Since the anchor map of $G^{(0)}$ is injective, we know
that $\Psi(\Ad_{\tilde{\sigma}}(G^{(0)}))=\ad_{\tilde{\nabla}}(A^{0})$.
Moreover, the natural morphism of the underlying
complexes, given by the derivative of $\flat_0$
\begin{equation*}
D\flat_0:\Ad_{\tilde{\sigma}}(G^{(0)}) \rightarrow {\flat_0}^*(\Ad_{\sigma}(G)),
\end{equation*}
 is a morphism of representations up to homotopy. By differentiating this morphism we obtain:
 \begin{equation*}
D\flat_0: \ad_{\tilde{\nabla}}(A^{(0)})\rightarrow \flat_0^*(\Psi(\Ad_{\sigma}(G))).
 \end{equation*} 
 On the other hand, the same morphism of chain complexes gives also a morphism:
  \begin{equation*}
D\flat_0: \ad_{\tilde{\nabla}}(A^{(0)})\rightarrow \flat_0^*(\ad_{\nabla}(A)).
 \end{equation*} 
Because the map of complexes is surjective, we conclude that:
\begin{equation*}
\flat_0^*(\ad_{\nabla}(A))=\flat_0^*(\Psi(\Ad_{\sigma}(G))).
\end{equation*}
On the other hand, the map $\flat_0$ is a submersion, therefore the pull-back operation is injective and hence $\ad_{\nabla}(A)=\Psi(\Ad_{\sigma}(G))$.

\end{proof}

In \cite{CrainicMoerdijk}, Crainic and Moerdijk introduced a deformation cohomology associated to a Lie algebroid $A$, 
denoted $H_{\mathrm{def}}(A)$. They proved that in degree two this cohomology controls the infinitesimal deformations of the Lie algebroid structure
and stated a rigidity conjecture (Conjecture $1$), which generalizes some rigidity properties of compact Lie groups. 
This conjecture follows from our previous results:

\begin{theorem}\label{conjecture}[Conjecture $1$ of \cite{CrainicMoerdijk}]
If $A$ is a Lie algebroid which admits a proper integrating Lie groupoid $G$ whose fibers are $2$-connected, 
then $H_{\mathrm{def}}^2(A)=0$.
\end{theorem}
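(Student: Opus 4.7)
The plan is to combine the machinery developed in this paper with the classical averaging argument for proper groupoids, mimicking the Van Est--Whitehead proof that $H^2(\mathfrak{g},\mathfrak{g})=0$ for a compact semisimple Lie algebra $\mathfrak{g}$. First, I recall from \cite{CrainicMoerdijk} that the deformation cohomology of $A$ coincides with the cohomology of the adjoint representation, $H^{\bullet}_{\mathrm{def}}(A) = H^{\bullet}(A,\ad(A))$, so the goal reduces to showing $H^2(A,\ad(A))=0$.

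Second, I would apply the van Est theorem together with Proposition \ref{Ad=ad}. The graded vector bundle $\Ad(G) = A \oplus TM$ is concentrated in degrees $0$ and $1$, so in the notation of Theorem \ref{isomorphism} we have $a=0$. Since $G$ integrates $A$ and has $2$-connected source fibers, Theorem \ref{isomorphism} applies with $l=2$ and yields an isomorphism $\Psi: H^2(G, \Ad(G)) \to H^2(A, \Psi(\Ad(G)))$. Proposition \ref{Ad=ad} identifies the target with $H^2(A,\ad(A)) = H^2_{\mathrm{def}}(A)$. It thus suffices to prove that $H^2(G, \Ad(G)) = 0$.

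Third, I would establish the general vanishing $H^n(G,E)=0$ for $n \geq 1$ and any unital representation up to homotopy $E$ of a proper Lie groupoid $G$, and then specialize to $E = \Ad(G)$. Since $G$ is proper, it admits a normalized Haar system together with a cutoff function, and these data produce an averaging operator $h: \hat{C}^k(G,E^{\bullet}) \to \hat{C}^{k-1}(G,E^{\bullet})$ of degree $-1$, obtained by integrating over the final groupoid argument. Using the decomposition $D = \tilde{F}_0 + \tilde{F}_1 + \tilde{F}_2 + \cdots$ from Proposition \ref{decomposition with respect to F}, the key point is that $h$ commutes with $\tilde{F}_j$ for every $j \neq 1$: the piece $\tilde{F}_0 = \partial$ is pointwise, and for $j \geq 2$ the operator $\tilde{F}_j$ is $\hat{C}(G)$-linear and acts only on the first $j$ entries of a cochain. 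The piece $\tilde{F}_1$ is the only one involving the simplicial face maps, and for it the classical homotopy identity $h\tilde{F}_1 + \tilde{F}_1 h = \mathrm{id}$ holds on cochains of positive form degree. Adding these contributions gives $hD + Dh = \mathrm{id}$ on $\hat{C}^k(G,E)$ for $k \geq 1$, whence $H^n(G,E)=0$ for $n \geq 1$.

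The main obstacle is the third step. While averaging is classical for ordinary representations of proper groupoids, its extension to the setting of representations up to homotopy requires verifying term-by-term that $h$ interacts correctly with every component $\tilde{F}_j$. The observation that makes this extension essentially routine is the structural dichotomy that only $\tilde{F}_1$ depends on the simplicial face maps, while the other components of $D$ are $\hat{C}(G)$-linear in the last argument and therefore commute with the integration defining $h$.
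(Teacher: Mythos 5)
Your first two steps coincide with the paper's proof: the identification $H_{\mathrm{def}}(A)\cong H(A,\ad_{\nabla}(A))$ from \cite{CrainicMoerdijk}/\cite{AC1}, followed by Proposition \ref{Ad=ad} and Theorem \ref{isomorphism} (with $a=0$, $l=2$) to transport the problem to proving $H^2(G,\Ad_{\sigma}(G))=0$. The divergence, and the genuine gap, is in your third step. The statement you propose to prove there --- $H^n(G,E)=0$ for all $n\geq 1$ and every unital representation up to homotopy $E$ of a proper Lie groupoid --- is false. Take $E$ to be the trivial line bundle $\mathbb{R}$ placed in degree $1$, with $F_1(g)=\id$ and all other $F_j=0$; this is a unital representation up to homotopy, and $\hat{C}(G,E)^{n}=\hat{C}^{n-1}(G)$ with $D=\pm\delta$, so $H^1(G,E)=H^0_{\mathrm{diff}}(G)$ is the algebra of smooth functions constant on orbits, which always contains the constants and is never zero. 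Properness does not help; no averaging argument can prove your claim.

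Where the argument breaks is the final inference. Your homotopy identity for $\tilde F_1$ and the graded commutation of $h$ with $\tilde F_j$, $j\neq 1$, are statements about cochain components of groupoid degree $k\geq 1$; the operator $h$ gives nothing on $\hat{C}^0(G,E^q)=\Gamma(E^q)$. But a cocycle of total degree $n$ has a component in $\hat{C}^0(G,E^n)=\Gamma(E^n)$ whenever $E^n\neq 0$, i.e.\ whenever $a\leq n\leq b$, and on that component the relation $hD+Dh=\id$ fails --- this is exactly what the counterexample above exploits. So from your homotopy one may only conclude vanishing in total degrees $n$ strictly above the top degree $b$ of $E$, since then every component of a total-degree-$n$ cocycle has groupoid degree $k\geq n-b\geq 1$. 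Fortunately this weaker statement is all the theorem needs: $\Ad_{\sigma}(G)=A\oplus TM$ sits in degrees $[0,1]$, so $b=1$ and $2>b$, giving $H^2(G,\Ad_{\sigma})=0$. Repaired in this form (and after a careful verification of the sign identities for $h$, including that the classical homotopy identity survives for the non-flat quasi-action $F_1$), your step three becomes essentially the content of Theorem $3.35$ of \cite{AC2}, which is the result the paper simply cites at this point instead of proving.
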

\begin{proof}
It is Theorem $3.11$ in \cite{AC1} that for any Lie algebroid $A$  and any connection $\nabla$ on $A$, 
there is a natural isomorphism
\begin{equation*}
H_{\mathrm{def}}(A)\cong H(A,\ad_{\nabla}).
\end{equation*}
Thus, we only need to prove that the cohomology with respect to the adjoint representation vanishes in degree $2$.
Let us chose a connection $\sigma$ on the Lie groupoid $G$ and denote by $\nabla$ the induced connection on $A$.
Then, by Theorem \ref{theorem:differentiation_reps_up_to_homotopy} and Proposition \ref{Ad=ad}, we obtain a map:
\begin{equation*}
\Psi:\hat{C}(G,\Ad_{\sigma})\rightarrow \Omega(A,\Psi(\Ad_\sigma))\cong \Omega(A,\ad_{\nabla}).
\end{equation*}
Since the fibers of $G$ are $2$-connected, Theorem \ref{isomorphism} implies that this map induces an isomorphism in cohomology in degree $2$.
Finally, since $G$ is proper, Theorem $3.35$ of \cite{AC2} implies that $H^k(G,\Ad_\sigma)=0$ for $k>1$. This concludes the proof.

\end{proof}

\thebibliography{10}

\bibitem{AC1}
C. Arias Abad and M. Crainic,
{\em Representations up to homotopy of Lie algebroids}, arXiv:0901.0319, submitted for publication.

\bibitem{AC2}
C. Arias Abad and M. Crainic,
{\em Representations up to homotopy and Bott's spectral sequence for Lie groupoids},  arXiv:0911.2859, submitted for publication.

\bibitem{Crainic}
M. Crainic,
{\em Differentiable and algebroid cohomology, van Est isomorphisms, and characteristic classes}, 
Commentarii Mathematici Helvetici {\bf 78} (2003), 681--721.

\bibitem{CrainicFernandes}
M. Crainic and R. Fernandes,
{\em Stability of symplectic leaves},
Inventiones Mathematicae 180, no. 3, (2010), 481--533.

\bibitem{CrainicMoerdijk}
M. Crainic and I. Moerdijk,
{\em Deformations of Lie brackets: cohomological aspects},
Journal of the EMS, {\bf 10} (2008), 1037--1059.

\bibitem{Mehta}
A. Gracia-Saz and R. Mehta,
{\em Lie algebroid structures on double vector bundles and representation theory of Lie algebroids}, 
Advances in Mathematics, Volume 223, No. 4 (2010), 1236--1275.

\bibitem{VanEst1}
W. T. van Est,
{\em Group cohomology and Lie algebra cohomology in Lie groups I, II},
Proc. Kon. Ned. Akad. {\bf 56} (1953), 484--504.

\bibitem{VanEst2}
W. T. van Est,
{\em On the algebraic cohomology concepts in Lie groups I, II},
Proc. Kon. Ned. Akad.{\bf 58} (1955), 225--233, 286--294.

\bibitem{VanEst3}
W. T. van Est,
{\em Une application d'une m\'ethode de Cartan-Leray}, 
Proc. Kon. Ned. Akad. {\bf 58} (1955), 542--544.

\end{document}